\newtheorem{theorem}{Theorem}[section]
\newtheorem{example}[theorem]{Example}
\newtheorem{remark}[theorem]{Remark}
\newtheorem{lemma}[theorem]{Lemma}
\newtheorem{proposition}[theorem]{Proposition}
\DeclarePairedDelimiter\ceil{\lceil}{\rceil}
\DeclarePairedDelimiter\floor{\lfloor}{\rfloor}
\def\u{{\bf u}}
\def\v{{\bf v}}
\def\w{{\bf w}}
\def\x{{\bf x}}
\def\y{{\bf y}}
\def\N{\mathbb N}
\def\cL{\mathcal L}
\def\cP{\mathcal P}
\def\cX{\mathcal X}
\def\fqss{\mathbb F_{q^6}}
\def\fq{\mathbb F_q}
\def\supp{{\rm Supp}}
\def\Div{{\rm Div}}
\def\lub{{\rm lub}}
\def\glb{{\rm glb}}
\def\supp{{\rm Supp}}
\def\char{\mbox{\rm Char}}
\newcommand{\al}{\alpha}
\newcommand{\be}{\beta}
\begin{document}

\title[Complete set of Pure Gaps in Function Fields]{Complete set of Pure Gaps in Function Fields}

\thanks{{\bf Keywords}: Pure gaps, Function fields, $GK$ function field, Kummer extensions}

\thanks{{\bf Mathematics Subject Classification (2010)}: 14H55, 11G20.}

\thanks{The first author was partially supported by FAPEMIG. The second author was partially supported by FAPERJ/RJ-Brazil Grant 201.650/2021. The third author was partially funded by CNPq and FAPEMIG}

\author{Alonso S. Castellanos, Erik A. R. Mendoza, and Guilherme Tizziotti}

\address{Faculdade de Matemática, Universidade Federal de Uberlândia, Campus Santa Mônica, CEP 38400-902, Uberlândia, Brazil}
\email{alonso.castellanos@ufu.br}

\address{Instituto de Matemática, Universidade Federal do Rio de Janeiro, Cidade Universitária, CEP 21941-909, Rio de Janeiro, Brazil}
\email{erik@im.ufrj.br}

\address{Faculdade de Matemática, Universidade Federal de Uberlândia, Campus Santa Mônica, CEP 38400-902, Uberlândia, Brazil}
\email{guilhermect@ufu.br}

\begin{abstract}
In this work, we provide a way to completely determine the set of pure gaps $G_0(P_1, P_2)$ at two rational places $P_1, P_2$ in a function field $F$ over a finite field $\mathbb{F}_q$, and its cardinality. Furthermore, we given a bound for the cardinality of the set $G_0(P_1, P_2)$ which is better, in some cases, than the generic bound given by Homma and Kim in \cite{HK2001}. As a consequence, we completely determine the set of pure gaps and its cardinality for two families of function fields: the $GK$ function field and Kummer extensions. 
\end{abstract}

\maketitle

\section{Introduction}

The concept of \textit{pure gap} with respect to two rational places $P_1, P_2$ in a function field $F$ over a finite field $\mathbb{F}_q$ was introduced in 2001 by Homma and Kim \cite{HK2001}. In this work they described the set of pure gaps in terms of the gap sequences in the respective places and used this new notion to formulate lower bounds for the minimum distance of AG codes supported by two places. A few years later the study was extended to several points by Carvalho and Torres \cite{CT2005}. After these two papers, many authors sought to characterize Weierstrass semigroups and pure gaps in special families of curves, e.g. \cite{GM2001, GM2004, BMMQ2018, TT2019, BC2022, HY2018} and \cite{YH2018}. However, explicitly describing the set of pure gaps and determining its cardinality is complicated even for specific curves. This problem is challenging and important in its own right and can be related to several topics within the theory of curves over finite fields, such as limiting the number of rational points, e.g. \cite{BR2013, SV1986}. But the interest in the set of pure gaps is mainly due to its applications in the analysis of Algebraic Geometry codes (AG codes). In particular, the use of pure gaps is related to improvements on the Goppa bound of the minimum distance of these codes. In this sense, many works appeared with a study on these applications, e.g. \cite{CT2016, BQZ2018, HK2001, CMQ2016, CMQ2023}. 

In \cite{HK2001} Homma and Kim also introduced a special finite subset of the Weierstrass semigroup $H(P_1,P_2)$ at a pair of rational places $P_1, P_2$ carrying information about the whole semigroup, as well as information about the gaps and pure gaps. This set, denoted by $\Gamma(P_1, P_2)$, was called later by Matthews \cite{M2004}  the \emph{minimal generating set} of $H(P_1,P_2)$. In this work, using the minimal generating set $\Gamma(P_1, P_2)$, we provide a way to completely determine the set of pure gaps $G_0(P_1, P_2)$ at two rational places $P_1, P_2$ in a function field $F$ over a finite field $\mathbb{F}_q$, and its cardinality. Furthermore, we given a bound for the cardinality of the set $G_0(P_1, P_2)$ which is better, in some cases studied here, than the generic bound given by Homma and Kim in \cite{HK2001}. As a consequence, we completely determine the set of pure gaps and its cardinality for two families of function fields: the $GK$ function field and Kummer extensions. We observe that the cardinality of the pure gap set for some particular cases of Kummer extensions was determined in \cite{BQZ2018} and \cite{YH2018}, but our results allow us to obtain the same cardinalities for such cases in a simpler way. In addition, we show that certain families of curves reach the bounds for the cardinality of $G_0(P_1, P_2)$ given in this work, that is, we show that these bounds are sharp.

The organization of the paper is as follows. In Section 2, we establish the notations and present the preliminary results on the Weierstrass semigroup $H(P_1,P_2)$ and pure gap set $G_0(P_1,P_2)$ at two rational places. In Section 3, for rational places $P_1, P_2$ in an arbitrary function field $F$, we decompose the minimal generating set $\Gamma(P_1, P_2)$ using square regions with side $\pi$, where $\pi$ is the period of the semigroup $H(P_1, P_2)$. With this decomposition, we obtain a decomposition of the pure gap set $G_0(P_1, P_2)$ and, consequently, we obtain a complete description of $G_0(P_1, P_2)$ and a formula for its cardinality, see Proposition \ref{Puregapsymmetry}. In addition, we exhibit a bound for the cardinality of the pure gap set $G_0(P_1, P_2)$, see Proposition \ref{cota pure gaps}. In Section 4, we illustrate our results in two families of function fields: the $GK$ function field and Kummer extensions, see Theorem \ref{card G0} and Theorem \ref{Card of pure gap set of Kummer} respectively.

\section{Preliminaries and notation}

Let $q$ be the power of a prime $p$, $\fq$ be the finite field with $q$ elements, and $K$ be the algebraic closure of $\fq$. Given a function field of one variable $F/K$ of genus $g=g(F)$, we denote by $\mathcal P_F$ the set of places in $F$, by $\Div (F)$ the group of divisors in $F$, and for a function $z \in F$ we let $(z), (z)_\infty$ and $(z)_0$ stand for the principal, pole and zero divisor of the function $z$ in $F$ respectively. 

For a divisor $D\in \Div(F)$, we define the Riemann-Roch space associated to the divisor $D$ as 
$$
\cL(D)=\{z\in F: (z)+D\geq 0\}\cup \{0\},
$$
and we denote by $\ell(D)$ its dimension as vector space over $K$. Let $\mathbb{N}$ be the set of positive integers and $\mathbb{N}_0 = \mathbb{N} \cup \{0 \}$. For a place $P\in \cP_{F}$, the Weierstrass semigroup at $P$ is defined by
$$
H(P)=\{s\in \mathbb{N}_0: (z)_{\infty}=sP\text{ for some }z\in F\}.
$$
A non-negative integer $s$ is called a non-gap at $P$ if $s\in H(P)$. An element in the set $G(P):= \mathbb{N}_0 \setminus H(P)$ is called a gap at $P$ and, for a function field $F/K$ with genus $g>0$, $|G(P)|=g$.

The Weierstrass semigroup $H(P)$ defined using one place admits a generalization for two places. For $P_1$ and $P_2$ distinct places in $F$, we define the Weierstrass semigroup at $P_1, P_2$ by
$$
H(P_1, P_2)=\{(s_1, s_2)\in \N_0^2: (z)_{\infty}=s_1P_1+s_2P_2\text{ for some }z\in F\}.
$$
Analogously as in the case of one place, the elements of the set $G(P_1, P_2):=\N_0^2\setminus H(P_1, P_2)$ are called gaps at $P_1, P_2$ and can be characterized using Riemann-Roch spaces, that is, a pair $(s_1, s_2)\in \N_0^2$ is a gap at $P_1, P_2$ if and only if 
\begin{equation*}
\ell\left(s_1P_1+s_2P_2\right)=\ell\left(s_1P_1+s_2P_2-P_j\right)\text{ for some }j\in \{1, 2\}.
\end{equation*}
Moreover, the pair $(s_1, s_2)$ is called a pure gap at $P_1, P_2$ if it satisfies
$$
\ell\left(s_1P_1+s_2P_2\right)=\ell\left(s_1P_1+s_2P_2-P_j\right) \text{ for all }j\in \{1, 2\}
$$
or, equivalently,
$$
\ell\left(s_1P_1+s_2P_2\right)=\ell\left((s_1-1)P_1+(s_2-1)P_2\right).
$$
We denote the pure gap set at $P_1, P_2$ by $G_0(P_1, P_2)$. In \cite[Corollary 2.2]{HK2001}, Homma and Kim provided the following bound for the cardinality of $G_0(P_1, P_2)$:
\begin{equation}\label{cota Homma}
|G_0(P_1, P_2)| \leq \dfrac{g(g-1)}{2}.
\end{equation}
 
Now, let $G(P_1)$ and $G(P_2)$ be the gap sets of the places $P_1$ and $P_2$ respectively. For each $\be\in G(P_1)$, we let $\tau_{P_1, P_2}(\be):=\min\{\gamma \in \N_0 : (\be, \gamma)\in H(P_1, P_2)\}$. From \cite[Lemma 2.6]{K1994}, we have that $G(P_2) =\{\tau_{P_1, P_2}(\be) : \be\in G(P_1)\} $ and therefore 
\begin{equation} \label{tau}
\begin{array}{cccl}
\tau_{P_1, P_2}:& G(P_1) & \rightarrow & G(P_2)\\
&\be & \mapsto & \min\{\gamma \in \N_0 : (\be, \gamma)\in H(P_1, P_2)\}
\end{array}
\end{equation}
 is a bijective map. The graph of the map $\tau_{P_1, P_2}$ is denoted by $\Gamma (P_1, P_2)$, that is, 
$$
\Gamma (P_1, P_2):=\{(\be, \tau_{P_1, P_2}(\be)) : \be\in G(P_1)\}.
$$

For $\x=(\be_1, \gamma_1)$ and $\y=(\be_2, \gamma_2)$, the least upper bound and the greatest lower bound of $\x$ and $\y$ is defined as $\lub(\x, \y) =(\max\{\be_1, \be_2\}, \max\{\gamma_1, \gamma_2\})$ and $\glb(\x, \y) =(\min\{\be_1, \be_2\}, \min\{\gamma_1, \gamma_2\})$, respectively. Let $\preceq$ be the natural partial order in $\mathbb{N}_0^2$. We denote $\x\not\preceq\y$ if $\be_1 > \be_2$ or $\gamma_1 > \gamma_2$. 

The following results show that it is enough to determine $\Gamma(P_1, P_2)$ to compute the Weierstrass semigroup $H(P_1, P_2)$, for this reason the set $\Gamma(P_1, P_2)$ is called minimal generating set of the Weierstrass semigroup $H(P_1, P_2)$.

\begin{lemma}\cite[Lemma 2.2]{K1994}\label{genset}
Let $P_1$ and $P_2$ be two distinct places in $F$. Then 
$$
H(P_1, P_2)=\{\lub(\x, \y): \x, \y\in \Gamma(P_1, P_2)\cup (H(P_1)\times \{0\})\cup (\{0\}\times H(P_2))\}.
$$
\end{lemma}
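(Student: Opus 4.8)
The plan is to prove the two inclusions separately, writing $S := \Gamma(P_1,P_2)\cup (H(P_1)\times\{0\})\cup(\{0\}\times H(P_2))$ and treating $\{\lub(\x,\y):\x,\y\in S\}\subseteq H(P_1,P_2)$ as the routine direction. For that direction I would first note $S\subseteq H(P_1,P_2)$: each element of $\Gamma(P_1,P_2)$ lies in $H(P_1,P_2)$ by the very definition of $\tau_{P_1,P_2}$, while a pair $(s,0)$ with $s\in H(P_1)$ comes from a function whose pole divisor is $sP_1$, and symmetrically for $\{0\}\times H(P_2)$. It then suffices to check that $H(P_1,P_2)$ is closed under $\lub$. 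Given $(a_1,b_1),(a_2,b_2)\in H(P_1,P_2)$ realized by $f,g$ with $(f)_\infty=a_1P_1+b_1P_2$ and $(g)_\infty=a_2P_1+b_2P_2$, the only nontrivial case (up to interchanging the two pairs) is $a_1>a_2$ and $b_2>b_1$; here $f+g$ has pole order exactly $a_1$ at $P_1$ and exactly $b_2$ at $P_2$, since the two dominant polar terms sit at distinct places and cannot cancel. Hence $\lub\bigl((a_1,b_1),(a_2,b_2)\bigr)=(a_1,b_2)\in H(P_1,P_2)$, and the inclusion follows.

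For the reverse containment, take $(n_1,n_2)\in H(P_1,P_2)$ and build an explicit decomposition. I would set $\x=(n_1,\tau_{P_1,P_2}(n_1))$, which lies in $\Gamma(P_1,P_2)$ when $n_1\in G(P_1)$ and equals $(n_1,0)\in H(P_1)\times\{0\}$ when $n_1\in H(P_1)$ (since then $\tau_{P_1,P_2}(n_1)=0$); in either case $\x\in S$ and its second coordinate is $\le n_2$. For the second point let $w:=\min\{s_1:(s_1,n_2)\in H(P_1,P_2)\}$, so that $w\le n_1$, and put $\y=(w,n_2)$. If $n_2\in H(P_2)$ then $w=0$ and $\y=(0,n_2)\in\{0\}\times H(P_2)\subseteq S$; if $n_2\in G(P_2)$ I claim $\y\in\Gamma(P_1,P_2)$. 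Granting this, $\lub(\x,\y)=(\max\{n_1,w\},\max\{\tau_{P_1,P_2}(n_1),n_2\})=(n_1,n_2)$, which finishes the inclusion.

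The heart of the argument, and the step I expect to be the main obstacle, is the claim that for $n_2\in G(P_2)$ the point $\y=(w,n_2)$ lies in $\Gamma(P_1,P_2)$, that is, $w\in G(P_1)$ and $\tau_{P_1,P_2}(w)=n_2$. This is a symmetry property of the minimal generating set: the pair that is minimal in the first coordinate for a fixed second coordinate is also minimal in the second coordinate for that first coordinate. To prove $\tau_{P_1,P_2}(w)=n_2$ I would argue by contradiction using a leading-coefficient cancellation. Suppose $\tau_{P_1,P_2}(w)=c<n_2$ and choose $f_1,f_2$ with $(f_1)_\infty=wP_1+cP_2$ and $(f_2)_\infty=wP_1+n_2P_2$. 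Since both have pole order exactly $w$ at $P_1$, a suitable combination $f_2-\la f_1$ (with $\la\ne0$ the ratio of the two polar leading coefficients at $P_1$) has pole order $<w$ at $P_1$, while keeping pole order exactly $n_2$ at $P_2$ because $c<n_2$. Its pole divisor is then $w'P_1+n_2P_2$ with $w'<w$, contradicting the minimality of $w$; thus $\tau_{P_1,P_2}(w)=n_2$. As $n_2\ge1$, we cannot have $w\in H(P_1)$ (which would force $\tau_{P_1,P_2}(w)=0$), so $w\in G(P_1)$ and $\y=(w,\tau_{P_1,P_2}(w))\in\Gamma(P_1,P_2)$, as required. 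The same cancellation device, now applied at $P_2$, is exactly what underlies the closure of $H(P_1,P_2)$ under $\lub$ above, so the whole proof ultimately rests on controlling pole orders under $K$-linear combinations of functions.
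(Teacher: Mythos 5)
The paper does not prove this lemma itself --- it is quoted verbatim from Kim \cite{K1994} --- so there is no internal proof to compare against; judged on its own terms, your argument is correct and is essentially the standard proof of Kim's lemma. Both directions are sound: the inclusion $\{\lub(\x,\y)\}\subseteq H(P_1,P_2)$ via $S\subseteq H(P_1,P_2)$ plus closure of $H(P_1,P_2)$ under $\lub$ (strict triangle inequality for valuations), and the reverse inclusion via the explicit decomposition $(n_1,n_2)=\lub\bigl((n_1,\tau_{P_1,P_2}(n_1)),(w,n_2)\bigr)$ together with the symmetry claim that the horizontal minimizer $(w,n_2)$ lies in $\Gamma(P_1,P_2)$ when $n_2\in G(P_2)$, proved by the leading-coefficient cancellation at $P_1$. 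The only point worth making explicit is that in the branch $n_2\in G(P_2)$ one has $w\geq 1$ (if $w=0$ then $(0,n_2)\in H(P_1,P_2)$ would force $n_2\in H(P_2)$), which is needed for the polar leading coefficients at $P_1$ to exist; this is immediate and does not affect correctness.
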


The next result shows us that the set $\Gamma(P_1, P_2)$ is also important for determining the pure gap set $G_0(P_1, P_2)$.

\begin{lemma}\label{Gammasettopuregaps}
\cite[Proposition 5.1]{CMQ2023} Let $P_1$ and $P_2$ be two distinct places in $F$. Then
$$
G_0(P_1, P_2)=\{\glb(\x, \y): \x, \y\in \Gamma(P_1, P_2), \, \x\not\preceq\y, \, \y\not\preceq\x\}.
$$
\end{lemma}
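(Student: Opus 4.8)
The plan is to translate the set identity into a pointwise description of pure gaps and then to read that description off from the geometry of $\Gamma(P_1,P_2)$. Recall that $(s_1,s_2)$ is a pure gap precisely when $\ell(D)=\ell(D-P_1)$ and $\ell(D)=\ell(D-P_2)$ for $D=s_1P_1+s_2P_2$. First I would record that $\ell(D)>\ell(D-P_1)$ holds exactly when some $z\in F$ has $(z)_\infty=s_1P_1+tP_2$ with $0\le t\le s_2$, that is, exactly when $(s_1,t)\in H(P_1,P_2)$ for some such $t$. Since $(s_1,0)\in H(P_1,P_2)$ if and only if $s_1\in H(P_1)$, the equality $\ell(D)=\ell(D-P_1)$ forces $s_1\in G(P_1)$, and for such $s_1$ it amounts to $\tau_{P_1,P_2}(s_1)>s_2$. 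Interchanging $P_1$ and $P_2$ and using the analogous function $\tau_{P_2,P_1}$, I obtain the characterization that $(s_1,s_2)\in G_0(P_1,P_2)$ if and only if $s_1\in G(P_1)$, $s_2\in G(P_2)$, $\tau_{P_1,P_2}(s_1)>s_2$, and $\tau_{P_2,P_1}(s_2)>s_1$.

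The heart of the argument is a reflection symmetry of the minimal generating set: I claim that $(\be,\ga)\in\Gamma(P_1,P_2)$ forces $\ga\in G(P_2)$ and $\tau_{P_2,P_1}(\ga)=\be$. To prove this I would take $z$ with $(z)_\infty=\be P_1+\ga P_2$ and assume, for contradiction, that $(\be',\ga)\in H(P_1,P_2)$ for some $\be'<\be$, witnessed by $z'$ with $(z')_\infty=\be'P_1+\ga P_2$. Because $z$ and $z'$ have the same pole order $\ga$ at $P_2$, a suitable scalar combination $z-\la z'$ cancels the leading term at $P_2$ and so has pole order $<\ga$ there; since the pole orders $\be\ne\be'$ at $P_1$ cannot cancel, it still has pole order exactly $\be$ at $P_1$. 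This produces $(\be,c)\in H(P_1,P_2)$ for some $c<\ga$, contradicting $\ga=\tau_{P_1,P_2}(\be)$; taking $\be'=0$ in the same computation also rules out $\ga\in H(P_2)$.

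With these two ingredients the two inclusions are routine. Given incomparable $\x=(\be_1,\ga_1)$ and $\y=(\be_2,\ga_2)$ in $\Gamma(P_1,P_2)$ I may assume $\be_1<\be_2$ and $\ga_1>\ga_2$, so $\glb(\x,\y)=(\be_1,\ga_2)$; then $\be_1\in G(P_1)$, $\ga_2\in G(P_2)$, $\tau_{P_1,P_2}(\be_1)=\ga_1>\ga_2$, and by the symmetry $\tau_{P_2,P_1}(\ga_2)=\be_2>\be_1$, so $(\be_1,\ga_2)$ is a pure gap. Conversely, for $(s_1,s_2)\in G_0(P_1,P_2)$ the characterization gives $s_1\in G(P_1)$ and $s_2\in G(P_2)$, so $\x:=(s_1,\tau_{P_1,P_2}(s_1))$ lies in $\Gamma(P_1,P_2)$ and, by the symmetry, so does $\y:=(\tau_{P_2,P_1}(s_2),s_2)$; the inequalities $\tau_{P_1,P_2}(s_1)>s_2$ and $\tau_{P_2,P_1}(s_2)>s_1$ make $\x,\y$ incomparable with $\glb(\x,\y)=(s_1,s_2)$.

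I expect the reflection symmetry to be the only real obstacle, since it is the one step that manipulates functions rather than lattice combinatorics. Its subtlety is exactly the tension between ``pole order equal to'' and ``pole order at most'': one must cancel the pole at the place $P_2$ where the two orders agree while guaranteeing that the pole at $P_1$ survives, which is precisely what the strict inequality $\be'<\be$ secures. Everything else is bookkeeping built on this symmetry and on the dimension count of the first step.
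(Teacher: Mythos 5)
Your proof is correct. The paper does not prove this lemma itself but only cites \cite[Proposition 5.1]{CMQ2023}, so there is no internal argument to compare against; what you have written is a complete, self-contained reconstruction along the standard lines. Your two ingredients are exactly the classical ones: the characterization of pure gaps as $s_1\in G(P_1)$, $s_2\in G(P_2)$, $\tau_{P_1,P_2}(s_1)>s_2$, $\tau_{P_2,P_1}(s_2)>s_1$ (essentially \cite{HK2001}), and the symmetry $\tau_{P_2,P_1}=\tau_{P_1,P_2}^{-1}$, i.e.\ $\Gamma(P_1,P_2)=\{(\tau_{P_2,P_1}(\ga),\ga):\ga\in G(P_2)\}$, which is the content of \cite[Lemma 2.6]{K1994} already quoted in the preliminaries. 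The one step that genuinely requires care --- cancelling the pole of order $\ga$ at $P_2$ with a scalar multiple while the strict inequality $\be'<\be$ guarantees the pole at $P_1$ survives --- is handled correctly (the scalar exists because $P_1,P_2$ are rational, so the leading Laurent coefficients lie in the constant field), and the final bookkeeping with $\glb$ and incomparability is accurate.
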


\section{Pure gap set at two places}

Let $P_1$ and $P_2$ be distinct places in an arbitrary function field $F/K$ with genus $g$, and let $\pi$ be the period of the semigroup $H(P_1, P_2)$, where the period is defined by $\pi:=\min\{k\in \N : k(P_1-P_2) \text{ is a principal divisor}\}$. We begin this section by proving the following result that establishes a relationship between the period $\pi$ and the bijective map $\tau_{P_1, P_2}$ given in (\ref{tau}). 

\begin{proposition}\label{prop_period}
Let $P_1$ and $P_2$ be distinct places in $F$, and $\pi$ be the period of the semigroup $H(P_1, P_2)$. For $\be\in G(P_1)$ and $k\in\N$, the following statements are equivalent:
\begin{enumerate}[i)]
\item $\be+k\pi\in G(P_1)$.
\item $k\pi< \tau_{P_1, P_2}(\be)$.
\end{enumerate}
If any of the above conditions is satisfied, then
$$
\tau_{P_1, P_2}(\be+k\pi)=\tau_{P_1, P_2}(\be)-k\pi.
$$
\end{proposition}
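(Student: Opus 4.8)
The plan is to exploit the function that realizes the period. Since $\pi$ is the period of $H(P_1,P_2)$, the divisor $\pi(P_1-P_2)$ is principal, so there is $t\in F$ with $(t)=\pi P_1-\pi P_2$, and hence $(t^{-1})=\pi P_2-\pi P_1$. The whole argument rests on one observation: if $z\in F$ has its pole divisor supported on $\{P_1,P_2\}$, then multiplying $z$ by a power of $t$ or of $t^{-1}$ changes only the valuations at $P_1$ and $P_2$, transferring pole order in blocks of $\pi$ from one place to the other. Concretely I would first record the shift rule: if $(s_1,s_2)\in H(P_1,P_2)$ with $s_2\geq k\pi$, then multiplying by $t^{-k}$ gives $(s_1+k\pi,\,s_2-k\pi)\in H(P_1,P_2)$; symmetrically, if $s_1\geq k\pi$, then multiplying by $t^{k}$ gives $(s_1-k\pi,\,s_2+k\pi)\in H(P_1,P_2)$. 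In each case one checks that the pole divisor of the new function is exactly the shifted pair, because the opposite valuation becomes $\geq 0$ and no new poles appear away from $P_1,P_2$.

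Write $\gamma:=\tau_{P_1,P_2}(\be)$, so that $(\be,\gamma)\in H(P_1,P_2)$ is the element of smallest second coordinate, and note $\gamma\geq 1$ since $\be\in G(P_1)$. For the implication $(ii)\Rightarrow (i)$ together with the displayed formula, I would assume $k\pi<\gamma$; since then $\gamma\geq k\pi$, the shift rule (multiplication by $t^{-k}$) yields $(\be+k\pi,\gamma-k\pi)\in H(P_1,P_2)$ with $\gamma-k\pi>0$, giving the bound $\tau_{P_1,P_2}(\be+k\pi)\leq\gamma-k\pi$ once $\be+k\pi$ is known to be a gap. To obtain $\be+k\pi\in G(P_1)$ and the reverse inequality at once, I would invoke minimality of $\gamma$: any $(\be+k\pi,\delta)\in H(P_1,P_2)$ with $\delta<\gamma-k\pi$ shifts back via $t^{k}$ (legal since $\be+k\pi\geq k\pi$) to $(\be,\delta+k\pi)\in H(P_1,P_2)$ with $\delta+k\pi<\gamma$, contradicting $\gamma=\tau_{P_1,P_2}(\be)$; the same contradiction with $\delta=0$ excludes $\be+k\pi\in H(P_1)$. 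Hence $\be+k\pi\in G(P_1)$ and $\tau_{P_1,P_2}(\be+k\pi)=\gamma-k\pi$.

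For $(i)\Rightarrow (ii)$ I would prove the contrapositive: assuming $k\pi\geq\gamma$, I produce a function whose pole divisor is exactly $(\be+k\pi)P_1$, showing $\be+k\pi\in H(P_1)$ and hence $\be+k\pi\notin G(P_1)$. Taking $z$ with $(z)_\infty=\be P_1+\gamma P_2$ and forming $z\,t^{-k}$, the valuation at $P_2$ becomes $-\gamma+k\pi\geq 0$ and the valuation at $P_1$ becomes $-(\be+k\pi)$, while all valuations at places other than $P_1,P_2$ remain nonnegative; thus the pole divisor is precisely $(\be+k\pi)P_1$, as desired.

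The step I expect to be the genuine obstacle is exactly this last one, and behind it the fact that $\gamma$ need not be a multiple of $\pi$: one cannot lower the $P_2$-pole to exactly $0$ by iterating $\pi$-shifts. The resolution is to multiply by the single power $t^{-k}$, which annihilates the $P_2$-pole in one stroke as soon as $-\gamma+k\pi\geq 0$ (the resulting $P_2$-valuation being nonnegative, not necessarily zero); verifying that no spurious poles arise at other places—which holds because both $z$ and $t^{-k}$ have all their poles among $\{P_1,P_2\}$—is the routine check that makes the pole divisor come out exactly as claimed.
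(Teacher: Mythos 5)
Your proof is correct and follows essentially the same route as the paper: both arguments rest on multiplying by powers of the function realizing the principal divisor $k\pi(P_1-P_2)$ to transfer pole order in blocks of $\pi$ between $P_1$ and $P_2$, and your minimality argument for the displayed formula matches the paper's two-inequality argument. The only point where the paper is slightly more careful is the $\delta=0$ instance of your backward shift (a function with pole divisor $(\be+k\pi)P_1$ may have a zero of some order $\alpha>0$ at $P_2$, so the shifted pair is $(\be,\,k\pi-\alpha)$ rather than exactly $(\be,\,k\pi)$), but both subcases $\alpha\geq k\pi$ and $0\leq\alpha<k\pi$ still produce the contradiction you need, exactly as in the paper's case analysis.
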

\begin{proof}
Let $\beta \in G(P_1)$. For $k\in\N$ we know that there is a function $f$ such that $(f) = k \pi P_1 - k \pi P_2$.

$i) \Rightarrow ii)$. Suppose that $\be+k\pi\in G(P_1)$. If $k \pi \geq \tau_{P_1, P_2}(\be)$, since $(0,k \pi) \in H(P_1, P_2)$ and $(\beta , \tau_{P_1, P_2}(\be)) \in H(P_1, P_2)$, we have that $(\beta , k \pi) \in H(P_1, P_2)$. So, there is a function $h_1$ such that $(h_1) = D - (\beta P_1 + k \pi P_2)$, where $D$ is an effective divisor and $P_1, P_2 \notin \supp(D)$. Then, $(h_1 f^{-1}) = D - \beta P_1 - k \pi P_2 - k \pi P_1 + k \pi P_2 = D - (\beta + k \pi )P_1$, and thus we have $\beta + k \pi \in H(P_1)$, a contradiction. 

$ii) \Rightarrow i)$. Now, suppose that $k\pi< \tau_{P_1, P_2}(\be)$. If $\beta + k \pi \in H(P_1)$, then there is a function $h_2$ such that $(h_2) = D' + \alpha P_2 - (\beta + k \pi)P_1$, where $\alpha \in \N_0$ and $D'$ is an effective divisor with $P_1, P_2 \notin \supp(D')$. Thus, $(h_2f) = D' + \alpha P_2 - (\beta + k \pi)P_1 + k \pi P_1 - k \pi P_2 = D' - \beta P_1 - (k \pi - \alpha)P_2$. Note that,

$\diamond$ if $\alpha \geq k \pi$, then $\beta \in H(P_1)$, and we have a contradiction;

$\diamond$ if $0 \leq \alpha < k \pi$, then we have $0 < k \pi - \alpha \leq k \pi < \tau_{P_1, P_2}(\be)$ and $(\beta , k \pi - \alpha) \in H(P_1, P_2)$, a contradiction by the definition of $\tau_{P_1, P_2}(\be)$.

To obtain the equality in the last part, suppose that $\beta + k \pi \in G(P_1)$. Since $(\beta , \tau_{P_1, P_2}(\be)) \in H(P_1, P_2)$, $k \pi < \tau_{P_1, P_2}(\be)$ and $(f) = k \pi P_1 - k \pi P_2$, we can get  $(\beta + k \pi , \tau_{P_1, P_2}(\be) - k \pi) \in H(P_1 , P_2)$. So, $\tau_{P_1, P_2}(\be) - k \pi \geq \tau_{P_1, P_2}(\be+k \pi)$. On the other hand, since $(\be+k\pi , \tau_{P_1, P_2}(\be+k\pi))\in H(P_1, P_2)$ and $(f)=k\pi P_1-k\pi P_2$, then $(\be, \tau_{P_1, P_2}(\be+k\pi)+k\pi)\in H(P_1, P_2)$. By definition of $\tau_{P_1, P_2}(\be)$, we conclude that $\tau_{P_1, P_2}(\be)-k\pi \leq \tau_{P_1, P_2}(\be+k\pi)$ and the result follows.
\end{proof}

Let $\Gamma(P_1, P_2)$ be the minimal generating set of the semigroup $H(P_1, P_2)$. For $k_1, k_2\in \N_0$, define the set
\begin{align*}
\Gamma_{k_1, k_2}:&=\Gamma(P_1, P_2) \cap \left((k_1\pi, (k_1+1)\pi]\times (k_2\pi, (k_2+1)\pi]\right)\\
&=\{(\be, \tau_{P_1, P_2}(\be)): \be\in G(P_1), \, k_1\pi<\be<(k_1+1)\pi, \, k_2\pi < \tau_{P_1, P_2}(\be)< (k_2+1)\pi\}.
\end{align*}
Therefore, 
\begin{equation}\label{DecompGammaset}
\Gamma(P_1, P_2)=\bigcup_{\substack{0\leq k_1\\ 0\leq k_2}}\Gamma_{k_1, k_2}.
\end{equation}

\begin{remark} \label{empty}
Note that, if $(k_1, k_2) \neq (j_1,j_2)$, then $\Gamma_{k_1, k_2} \cap \Gamma_{j_1, j_2} = \emptyset$. So, the above union is disjoint. Furthermore, is well-known from Weierstrass semigroup's theory (see e.g. \cite[Lemma 2.1]{K1994} and Riemann-Roch's Theorem in \cite{S2009}) that $(a,b) \in H(P_1, P_2)$ if $a+b\geq 2g$. So, we have that if $k_1+k_2\geq \ceil*{\frac{2g-1}{\pi}}$, then $\Gamma_{k_1, k_2}=\emptyset$.
\end{remark}

Next, as a consequence of Proposition \ref{prop_period}, we show that to determine $\Gamma(P_1, P_2)$ it is enough to determine the sets of the form $\Gamma_{k, 0}$ for $k \geq 0$. 

For each $j\in \N_0$, define $$\w_{j}:=(-j\pi, j\pi) \in \mathbb{Z}^2.$$

\begin{proposition} 
For $i,j \geq 0$, 
\begin{equation*}\label{Gammasymmetry}
\Gamma_{i,j}=\Gamma_{i+j, 0}+\w_{j}.
\end{equation*}
In particular, 
\begin{equation}\label{Genus-Gamma}
\sum_{0\leq k}(k+1)|\Gamma_{k, 0}|=g.
\end{equation}
\end{proposition}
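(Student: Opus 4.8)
The plan is to prove the set equality by establishing both inclusions, each driven by Proposition \ref{prop_period}, together with the elementary observation that every multiple of $\pi$ lies in both $H(P_1)$ and $H(P_2)$. Indeed, if $f$ is the function with $(f)=\pi P_1-\pi P_2$ whose existence defines the period, then $(f^{-1})_\infty=\pi P_1$ and $(f)_\infty=\pi P_2$, so $\pi\in H(P_1)\cap H(P_2)$ and hence $k\pi\in H(P_1)\cap H(P_2)$ for every $k\geq 0$. Throughout I abbreviate $\tau:=\tau_{P_1, P_2}$. The case $j=0$ is trivial because $\w_0=(0,0)$, so I may assume $j\geq 1$, which is exactly the range where Proposition \ref{prop_period} (stated for $k\in\N$) applies.

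For the inclusion $\Gamma_{i,j}\subseteq \Gamma_{i+j,0}+\w_j$, I would start from a point $(\be,\ga)\in\Gamma_{i,j}$, so that $\be\in G(P_1)$, $\ga=\tau(\be)$, $i\pi<\be<(i+1)\pi$ and $j\pi<\ga<(j+1)\pi$. Since $j\pi<\ga=\tau(\be)$, condition (ii) of Proposition \ref{prop_period} holds with $k=j$; hence $\be+j\pi\in G(P_1)$ and $\tau(\be+j\pi)=\tau(\be)-j\pi=\ga-j\pi$. The range conditions then translate directly: $(i+j)\pi<\be+j\pi<(i+j+1)\pi$ and $0<\ga-j\pi<\pi$, so $(\be+j\pi,\ga-j\pi)\in\Gamma_{i+j,0}$. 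Since $(\be,\ga)=(\be+j\pi,\ga-j\pi)+\w_j$, this gives the first inclusion.

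For the reverse inclusion $\Gamma_{i+j,0}+\w_j\subseteq\Gamma_{i,j}$, I would take $(\be',\ga')\in\Gamma_{i+j,0}$ and set $\be:=\be'-j\pi$ (a positive integer, since $\be'>(i+j)\pi\geq j\pi$). Here Proposition \ref{prop_period} cannot be invoked directly, because it produces information about $\be+k\pi$ from the assumption $\be\in G(P_1)$, whereas I only know the translated point $\be'=\be+j\pi$ is a gap; this is the main obstacle. I would resolve it with the semigroup observation above: if $\be\in H(P_1)$, then $\be+j\pi=\be'\in H(P_1)$ because $j\pi\in H(P_1)$ and $H(P_1)$ is closed under addition, contradicting $\be'\in G(P_1)$; hence $\be\in G(P_1)$. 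Now Proposition \ref{prop_period} applies to $\be$: since $\be+j\pi=\be'\in G(P_1)$, condition (i) holds, so $j\pi<\tau(\be)$ and $\tau(\be)=\tau(\be')+j\pi=\ga'+j\pi$. Checking ranges, $i\pi<\be<(i+1)\pi$ and $j\pi<\ga'+j\pi<(j+1)\pi$, so $(\be,\tau(\be))=(\be',\ga')+\w_j\in\Gamma_{i,j}$, completing the equality.

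Finally, for the counting identity I would use that $\tau$ is a bijection $G(P_1)\to G(P_2)$, so $\Gamma(P_1,P_2)$ is in bijection with $G(P_1)$ and $|\Gamma(P_1,P_2)|=|G(P_1)|=g$. By the disjoint decomposition (\ref{DecompGammaset}) (disjointness being Remark \ref{empty}) and the set equality just proved --- which shows that translation by $\w_j$ is a bijection and hence $|\Gamma_{i,j}|=|\Gamma_{i+j,0}|$ --- I would group the terms according to $k=i+j$. For each fixed $k\geq 0$ there are exactly $k+1$ index pairs $(i,j)$ with $i+j=k$ and $i,j\geq 0$, each contributing $|\Gamma_{k,0}|$, so
$$
g=\sum_{k_1,k_2\geq 0}|\Gamma_{k_1,k_2}|=\sum_{k\geq 0}\;\sum_{\substack{i+j=k\\ i,j\geq 0}}|\Gamma_{i,j}|=\sum_{k\geq 0}(k+1)\,|\Gamma_{k,0}|,
$$
which is (\ref{Genus-Gamma}); all sums are finite by Remark \ref{empty}.
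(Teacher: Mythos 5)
Your proof is correct and follows essentially the same route as the paper: both inclusions rest on Proposition \ref{prop_period} together with the observation that $\pi\in H(P_1)$ forces $\be'-j\pi\in G(P_1)$, and the counting identity is obtained from the disjoint decomposition (\ref{DecompGammaset}) exactly as in the paper. You merely write out both inclusions (the paper does one and declares the other similar) and are slightly more explicit about the $j=0$ case and the applicability of Proposition \ref{prop_period}.
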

\begin{proof}
Let $(\be, \tau_{P_1, P_2}(\be))$ be an element in $\Gamma_{i+j, 0}$. Then $\be\in G(P_1)$, $(i+j)\pi< \be< (i+j+1)\pi$ and $0< \tau_{P_1, P_2}(\be)< \pi$. From definition of the period $\pi$ we have that $\pi\in H(P_1)$, therefore $\be-j\pi\in G(P_1)$ and $i\pi< \be-j\pi< (i+1)\pi$. Furthermore, from Proposition \ref{prop_period}, we obtain that $\tau_{P_1, P_2}(\be-j\pi)=\tau_{P_1, P_2}(\be)+j\pi$. Therefore $j\pi< \tau_{P_1, P_2}(\be-j\pi)< (j+1)\pi$ and
$$
(\be, \tau_{P_1, P_2}(\be))+\w_{j}=(\be-j\pi, \tau_{P_1, P_2}(\be-j\pi))\in \Gamma_{i, j}.
$$
This implies that $\Gamma_{i+j, 0}+\w_{j}\subseteq \Gamma_{i, j}$. The proof of the other inclusion is similar. To finish, using (\ref{DecompGammaset}) and the equality above, we have
$$
g=|\Gamma(P_1, P_2)|=\sum_{\substack{0\leq i \\0\leq j}}|\Gamma_{i, j}|=\sum_{\substack{0\leq i \\0\leq j}}|\Gamma_{i+j, 0}| =\sum_{0\leq k}(k+1)|\Gamma_{k, 0}|.
$$
\end{proof}

Analogously, for $k_1, k_2\in \N_0$ define the set  
$$
G_{k_1, k_2}:=G_0(P_1, P_2) \cap \left((k_1\pi, (k_1+1)\pi]\times (k_2\pi, (k_2+1)\pi]\right).
$$
Therefore, 
\begin{equation}\label{DecompPuregapset}
G_0(P_1, P_2)=\bigcup_{\substack{0\leq k_1\\ 0\leq k_2}}G_{k_1, k_2}.
\end{equation}

As in the case of $\Gamma_{k_1, k_2}$, we have that the above union is disjoint, and if $k_1 + k_2\geq \ceil*{\frac{2g-1}{\pi}}$, then $G_{k_1, k_2} = \emptyset$.

From Lemma \ref{Gammasettopuregaps}, we can rewrite the set $G_{k_1, k_2}$ as
\begin{equation}\label{Gk1k2}
G_{k_1, k_2}=\{\glb(\u, \v): \u\in \Gamma_{k_1, l_2}, \, \v \in \Gamma_{l_1 ,k_2}, \u\not\preceq\v, \, \v\not\preceq\u,  \, k_1\leq l_1, \, k_2\leq l_2\}.
\end{equation}

The following result shows that to determine $G_0(P_1, P_2)$ it is enough to determine the sets of the form  $G_{k, 0}$ for $k\geq 0$.
\begin{proposition}\label{Puregapsymmetry}
For $i,j \geq 0$,
$$
G_{i, j}=G_{i+j, 0}+\w_{j}. 
$$
In particular, 
\begin{equation}\label{CardinalityPuregaps}
|G_0(P_1, P_2)|= \sum_{ k = 0}^ {\ceil*{\frac{2g-1}{\pi}}-1}(k+1)|G_{k, 0}|.
\end{equation}
\end{proposition}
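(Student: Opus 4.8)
The plan is to establish the set-level symmetry $G_{i,j}=G_{i+j,0}+\w_{j}$ first, and then to read off the cardinality formula by a counting argument, exactly paralleling the proof of the analogous symmetry for $\Gamma_{i,j}$ above. Since $\w_{j}=(-j\pi,j\pi)$ translates the square $((i+j)\pi,(i+j+1)\pi]\times(0,\pi]$ onto the square $(i\pi,(i+1)\pi]\times(j\pi,(j+1)\pi]$, the two squares cutting out $G_{i+j,0}$ and $G_{i,j}$ are matched by the translation; thus the whole content of the first equality is that translation by $\w_{j}$ carries pure gaps to pure gaps.

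To prove that, I would use the defining function of the period directly. By definition of $\pi$ there is $f\in F$ with $(f)=\pi P_1-\pi P_2$, hence $(f^{j})=j\pi P_1-j\pi P_2$ for every $j\ge 0$. Multiplication by $f^{j}$ is a $K$-isomorphism $\cL(D)\xrightarrow{\sim}\cL(D-j\pi P_1+j\pi P_2)$ for every divisor $D$, so $\ell(D)=\ell(D-j\pi P_1+j\pi P_2)$. Applying this with $D=aP_1+bP_2$ and with $D=(a-1)P_1+(b-1)P_2$ shows that whenever $(a,b)$ is a pure gap, the translate $(a-j\pi,b+j\pi)$ satisfies $\ell((a-j\pi)P_1+(b+j\pi)P_2)=\ell((a-j\pi-1)P_1+(b+j\pi-1)P_2)$, i.e. it is again a pure gap; running the same argument with $f^{-j}$ gives the converse statement for $(a+j\pi,b-j\pi)$. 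Taking $(a,b)\in G_{i+j,0}$ one checks $a-j\pi>i\pi\ge 0$ and $b+j\pi>j\pi\ge 0$, so the translate lies in $\N_0^2$ and in the correct square, giving $G_{i+j,0}+\w_{j}\subseteq G_{i,j}$; taking $(a,b)\in G_{i,j}$ one has $b-j\pi>0$, so $(a+j\pi,b-j\pi)\in G_{i+j,0}$, giving the reverse inclusion. Hence $G_{i,j}=G_{i+j,0}+\w_{j}$.

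With the symmetry in hand, the cardinality formula is bookkeeping. Since the union in (\ref{DecompPuregapset}) is disjoint and translation by $\w_{j}$ is a bijection, $|G_{i,j}|=|G_{i+j,0}|$, so
\begin{equation*}
|G_0(P_1,P_2)|=\sum_{0\le i,\,0\le j}|G_{i,j}|=\sum_{0\le i,\,0\le j}|G_{i+j,0}|=\sum_{0\le k}(k+1)|G_{k,0}|,
\end{equation*}
where the last step groups the terms by $k=i+j$, there being exactly $k+1$ pairs $(i,j)\in\N_0^2$ with $i+j=k$. Because $G_{k_1,k_2}=\emptyset$ once $k_1+k_2\ge\ceil*{\frac{2g-1}{\pi}}$, the term $G_{k,0}$ vanishes for $k\ge\ceil*{\frac{2g-1}{\pi}}$, which truncates the sum to the stated range and yields (\ref{CardinalityPuregaps}).

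Alternatively, one can avoid Riemann--Roch and argue purely combinatorially from the description (\ref{Gk1k2}) together with the already-proved identity $\Gamma_{i,j}=\Gamma_{i+j,0}+\w_{j}$, using that both $\glb$ and the partial order $\preceq$ commute with translation by a fixed vector; this mirrors the proof of the $\Gamma$-symmetry almost verbatim, at the cost of some extra index bookkeeping for the auxiliary indices $l_1,l_2$. I expect the only genuinely delicate point in either route to be verifying that the shifted pairs remain in $\N_0^2$ and land in the intended half-open square, so that a formal equality of dimensions really does correspond to membership in $G_{i,j}$; once the square-matching under $\w_{j}$ is checked, everything else is routine.
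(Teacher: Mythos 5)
Your argument is correct, but your primary route is genuinely different from the paper's. You prove the symmetry $G_{i,j}=G_{i+j,0}+\w_{j}$ directly from the linear equivalence $j\pi P_1\sim j\pi P_2$: multiplication by $f^{\pm j}$ identifies the relevant Riemann--Roch spaces, so the defining condition $\ell(aP_1+bP_2)=\ell((a-1)P_1+(b-1)P_2)$ of a pure gap is preserved under translation by $\w_{j}$, and the only remaining check is that the translate stays in $\N^2$ and lands in the correct half-open square, which you carry out. The paper instead stays entirely inside the combinatorial framework: it writes $\u=\glb(\u_1,\u_2)\in G_{i+j,0}$ with $\u_1\in\Gamma_{i+j,u}$ and $\u_2\in\Gamma_{v,0}$ incomparable, using the description (\ref{Gk1k2}) that comes from Lemma \ref{Gammasettopuregaps}, then applies the already-proved symmetry $\Gamma_{i,j}=\Gamma_{i+j,0}+\w_{j}$ to the two generators and the fact that $\glb$ and incomparability commute with translation --- exactly the alternative you sketch in your final paragraph. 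Your Riemann--Roch route is more self-contained (it needs neither Lemma \ref{Gammasettopuregaps} nor the $\Gamma$-symmetry) and makes the geometric source of the periodicity transparent; the paper's route buys uniformity with the rest of Section 3, where everything is ultimately computed from the sets $\Gamma_{k,0}$. The counting step --- $|G_{i,j}|=|G_{i+j,0}|$, grouping by $k=i+j$ with $k+1$ pairs per value, and truncating via $G_{k,0}=\emptyset$ for $k\ge\ceil*{\frac{2g-1}{\pi}}$ --- is identical in both.
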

\begin{proof}
Let $\u\in G_{i+j, 0}$. From (\ref{Gk1k2}), there exist elements $\u_1\in \Gamma_{i+j, u}$ and $\u_2\in \Gamma_{v, 0}$, with $\u_1\not\preceq \u_2$ and $\u_2\not\preceq \u_1$, such that  
$$
\u=\glb(\u_1, \u_2),
$$
where $u\geq 0$ and $v\geq i+j$. From Proposition \ref{Gammasymmetry} we have 
$$
\v_1:=\u_1+\w_{j}\in \Gamma_{i, u+j}\quad \text{and}\quad\v_2:=\u_2+\w_{j}\in \Gamma_{v-j, j},
$$
where $u+j\geq j$ and $v-j\geq i\geq 0$. Furthermore, since $\v_1\not\preceq \v_2$ and $\v_2\not\preceq \v_1$ we obtain 
$$
\u+\w_{j}=\glb(\u_1, \u_2)+\w_{j}=\glb(\u_1+\w_{j}, \u_2+\w_{j})=\glb(\v_1, \v_2)\in G_{i, j}.
$$
So, $G_{i+j, 0}+\w_{j}\subseteq G_{i, j}$. The other inclusion is proved analogously. To finish the proof, from (\ref{DecompPuregapset}) and the equality above, we have
$$
|G_0(P_1, P_2)|=\sum_{\substack{0\leq i \\0\leq j}}|G_{i, j}|= \sum_{\substack{0\leq i \\0\leq j}}|G_{i+j, 0}| =\sum_{0\leq k}(k+1)|G_{k, 0}|.
$$
The result follows since $G_{k,0} = \emptyset$ for $k \geq \ceil*{\frac{2g-1}{\pi}}$.
\end{proof}

Note that, by (\ref{DecompPuregapset}) and Proposition \ref{Puregapsymmetry}, we have

\begin{equation}\label{conjunto pure gaps}
\displaystyle G_0(P_1, P_2) = \bigcup_{\substack{0\leq i,j \\ i+j < \ceil*{\frac{2g-1}{\pi}}}} \left( G_{i+j,0} + \w_{j}  \right) = \bigcup_{\substack{0 \leq j \leq k < \ceil*{\frac{2g-1}{\pi}}}} \left( G_{k,0} + \w_{j}  \right).
\end{equation}

Now, we will see how to determine the set $G_{k, 0}$ using the sets $\Gamma_{k_1, k_2}$. For this, fix $k\in \N_0$ and note that 
\begin{align*}
G_{k, 0}&=\{\glb(\u, \v): \u\in \Gamma_{k, k_2}, \, \v \in \Gamma_{k_1 ,0},\, \u\not\preceq \v, \, \v\not\preceq \u, \, k\leq k_1, \, 0\leq k_2\}\\
&=\{\glb(\u, \v): \u\in \Gamma_{k, k_2}, \, \v \in \Gamma_{k_1 ,0}, \,  \u\not\preceq \v, \, \v\not\preceq \u, \,  k< k_1, \, 0< k_2\}\\
&\quad \cup \{\glb(\u, \v): \u, \v\in \Gamma_{k, 0},  \, \u\not\preceq \v, \, \v\not\preceq \u\}\\
&\quad \cup\{\glb(\u, \v): \u\in \Gamma_{k, 0}, \, \v \in \Gamma_{k_1 ,0},\,\u\not\preceq \v, \, \v\not\preceq \u, \, k< k_1\}\\
&\quad \cup \{\glb(\u, \v): \u\in \Gamma_{k, k_2}, \, \v \in \Gamma_{k ,0}, \, \u\not\preceq \v, \, \v\not\preceq \u, \, 0< k_2\}\\
&=\{\glb(\u, \v): \u\in \Gamma_{k, k_2}, \, \v \in \Gamma_{k_1 ,0}, \, k< k_1, \, 0< k_2\}\\
&\quad \cup \{\glb(\u, \v): \u, \v\in \Gamma_{k, 0},  \, \u\not\preceq \v, \, \v\not\preceq \u\}\\
&\quad \cup\{\glb(\u, \v): \u\in \Gamma_{k, 0}, \, \v \in \Gamma_{k_1 ,0}, \, \u\not\preceq \v, \, k< k_1\}\\
&\quad \cup \{\glb(\u, \v): \u\in \Gamma_{k, k_2}, \, \v \in \Gamma_{k ,0}, \,\, \v\not\preceq \u, \, 0< k_2\}.\\
\end{align*}    
To simplify notation, denote
\begin{equation*}
\begin{array}{l}
G_{k, 0}^1=\{\glb(\u, \v): \u\in \Gamma_{k, k_2}, \, \v \in \Gamma_{k_1 ,0}, \, k< k_1, \, 0< k_2\},\\
G_{k, 0}^2=\{\glb(\u, \v): \u, \v\in \Gamma_{k, 0},  \, \u\not\preceq \v, \, \v\not\preceq \u\},\\
G_{k, 0}^3=\{\glb(\u, \v): \u\in \Gamma_{k, 0}, \, \v \in \Gamma_{k_1 ,0}, \, \u\not\preceq \v, \, k< k_1\}, \text{ and}\\
G_{k, 0}^4=\{\glb(\u, \v): \u\in \Gamma_{k, k_2}, \, \v \in \Gamma_{k ,0}, \,\, \v\not\preceq \u, \, 0< k_2\}.
\end{array}
\end{equation*}
Thus, for each $k\in \N_0$ we have that
\begin{equation}\label{G_k0}
G_{k, 0}=G_{k, 0}^1\cup G_{k, 0}^2\cup G_{k, 0}^3\cup G_{k, 0}^4.
\end{equation}
Note that, by definition, such union is disjoint and, by (\ref{conjunto pure gaps}), if we determine the sets $G_{k, 0}^i$, for $i=1,2,3,4$, then we can determine $G_0(P_1, P_2)$. Below we give some information about such sets.

\begin{lemma} \label{card G10}
$|G_{k, 0}^1| = \left(\sum_{k<k_1}|\Gamma_{k_1, 0}|\right)^2$.
\end{lemma}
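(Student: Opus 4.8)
The plan is to analyze the set
$$
G_{k, 0}^1=\{\glb(\u, \v): \u\in \Gamma_{k, k_2}, \, \v \in \Gamma_{k_1 ,0}, \, k< k_1, \, 0< k_2\}
$$
and show that the map sending a pair $(\u, \v)$ to $\glb(\u, \v)$ restricts to a bijection between such pairs and $G_{k, 0}^1$. First I would fix $\u=(\be_1, \ga_1)\in \Gamma_{k, k_2}$ and $\v=(\be_2, \ga_2)\in \Gamma_{k_1, 0}$ with $k< k_1$ and $0< k_2$. The crucial observation is about the relative sizes of the coordinates: since $\u$ lies in the band $(k\pi, (k+1)\pi]\times (k_2\pi, (k_2+1)\pi]$ and $\v$ lies in $(k_1\pi, (k_1+1)\pi]\times (0, \pi]$, the hypotheses $k<k_1$ and $0<k_2$ force $\be_1<\be_2$ (because $\be_1\leq (k+1)\pi\leq k_1\pi<\be_2$) and $\ga_2<\ga_1$ (because $\ga_2\leq \pi\leq k_2\pi<\ga_1$). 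Hence the conditions $\u\not\preceq\v$ and $\v\not\preceq\u$ are \emph{automatically} satisfied, which is exactly why they were dropped from the definition of $G_{k,0}^1$, and moreover $\glb(\u, \v)=(\be_1, \ga_2)$ picks up its first coordinate entirely from $\u$ and its second coordinate entirely from $\v$.

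Next I would exploit this coordinate separation to count. Because $\glb(\u, \v)=(\be_1, \ga_2)$ with $\be_1$ determined by $\u$ and $\ga_2$ determined by $\v$, the assignment $(\u, \v)\mapsto \glb(\u, \v)$ factors as a product of the first-coordinate projection on the $\u$ side and the second-coordinate projection on the $\v$ side. I would argue that each of these projections is injective on the relevant set: the first coordinates of distinct elements of $\Gamma(P_1, P_2)$ are distinct (since $\Gamma(P_1, P_2)$ is the graph of the function $\tau_{P_1, P_2}$, so $\be$ determines $\tau_{P_1, P_2}(\be)$), and likewise the second coordinates are distinct (since $\tau_{P_1, P_2}$ is a bijection by (\ref{tau}), so $\ga=\tau_{P_1,P_2}(\be)$ determines $\be$). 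Consequently the pair $(\be_1, \ga_2)$ uniquely recovers both $\u$ and $\v$, so the map is a bijection onto $G_{k,0}^1$.

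With injectivity established, the cardinality is the number of admissible pairs $(\u, \v)$. The element $\u$ ranges over $\bigcup_{k_2>0}\Gamma_{k, k_2}$, and the element $\v$ ranges over $\bigcup_{k_1>k}\Gamma_{k_1, 0}$. By Proposition \ref{Gammasymmetry}, each $\Gamma_{k, k_2}$ is in bijection with $\Gamma_{k+k_2, 0}$ via the translation $\w_{k_2}$, so as $k_2$ runs over all positive integers the index $k+k_2$ runs over all integers strictly greater than $k$; therefore $\sum_{k_2>0}|\Gamma_{k, k_2}|=\sum_{k_1>k}|\Gamma_{k_1, 0}|$. Thus both factors equal $\sum_{k<k_1}|\Gamma_{k_1, 0}|$, and the count is the product
$$
|G_{k, 0}^1|=\left(\sum_{k<k_1}|\Gamma_{k_1, 0}|\right)\cdot\left(\sum_{k<k_1}|\Gamma_{k_1, 0}|\right)=\left(\sum_{k<k_1}|\Gamma_{k_1, 0}|\right)^2.
$$
The main obstacle I anticipate is the bookkeeping needed to verify injectivity cleanly — specifically, making sure that distinct pairs cannot collapse to the same $\glb$ and that the identification of the $\u$-count via the symmetry of Proposition \ref{Gammasymmetry} correctly matches the indices; the geometry (that the two bands are disjoint and comparable in each coordinate) is what makes the $\glb$ split coordinatewise and drives the whole argument, so I would state that structural fact carefully before counting.
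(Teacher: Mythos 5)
Your proposal is correct and follows essentially the same route as the paper: establish that $(\u,\v)\mapsto\glb(\u,\v)$ is injective on the admissible pairs, count the pairs, and convert $\sum_{0<k_2}|\Gamma_{k,k_2}|$ into $\sum_{k<k_1}|\Gamma_{k_1,0}|$ via Proposition \ref{Gammasymmetry}. The only difference is that you spell out the coordinate-separation argument (that $\glb(\u,\v)=(\be_1,\ga_2)$ and that each coordinate recovers its element because $\Gamma(P_1,P_2)$ is the graph of the bijection $\tau_{P_1,P_2}$), which the paper leaves as an unjustified assertion.
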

\begin{proof}
Note that, for $\u_1, \u_2\in \Gamma_{k, k_2}$ and $\v_1, \v_2\in \Gamma_{k_1, 0}$, where $k<k_1$ and $0<k_2$, we have that $\glb(\u_1, \v_1)=\glb(\u_2, \v_2)$ if and only if $\u_1=\u_2$ and $\v_1=\v_2$. Therefore,
$$
|G_{k, 0}^1|=|\{\glb(\u, \v): \u\in \Gamma_{k, k_2}, \, \v \in \Gamma_{k_1 ,0}, \, k< k_1, \, 0< k_2\}|=\left(\sum_{k<k_1}|\Gamma_{k_1, 0}|\right)\left(\sum_{0<k_2}|\Gamma_{k, k_2}|\right).
$$
Moreover, from Proposition \ref{Gammasymmetry}, we have $\Gamma_{k, k_2}=\Gamma_{k+k_2, 0}+\w_{k_2}$. Thus, $|\Gamma_{k_, k_2}|=|\Gamma_{k+k_2, 0}|$ and 
\begin{equation*}
|G_{k, 0}^1|=\left(\sum_{k<k_1}|\Gamma_{k_1, 0}|\right)\left(\sum_{0<k_2}|\Gamma_{k, k_2}|\right)=\left(\sum_{k<k_1}|\Gamma_{k_1, 0}|\right)\left(\sum_{0<k_2}|\Gamma_{k+k_2, 0}|\right)=\left(\sum_{k<k_1}|\Gamma_{k_1, 0}|\right)^2.
\end{equation*}
\end{proof}

\begin{remark}\label{G2 G3 G4}
From definition we get 
\begin{equation}\label{boundsG^ik0}
\begin{array}{l}
0\leq |G_{k, 0}^2|\leq |\Gamma_{k, 0}|(|\Gamma_{k, 0}|-1)=|\Gamma_{k, 0}|^2-|\Gamma_{k, 0}|,\\
0\leq |G_{k, 0}^3|\leq |\Gamma_{k ,0}|\left(\sum_{k<k_1}|\Gamma_{k_1, 0}|\right), \text{ and}\\
0\leq |G_{k, 0}^4|\leq |\Gamma_{k ,0}|\left(\sum_{0<k_2}|\Gamma_{k, k_2}|\right)=|\Gamma_{k ,0}|\left(\sum_{k<k_1}|\Gamma_{k_1, 0}|\right).
\end{array}
\end{equation} 
\end{remark}

In the next result, we present bounds for the cardinality of $G_0(P_1, P_2)$.

\begin{proposition}\label{cota pure gaps} 
Let $P_1$ and $P_2$ be distinct places in the function field $F/K$ of genus $g$. Then
$$
\sum_{0\leq k}(k+1)\left(\sum_{k<k_1}|\Gamma_{k_1, 0}|\right)^2\leq |G_0(P_1, P_2)|\leq \sum_{0\leq k}(k+1)\left(\sum_{k\leq k_1}|\Gamma_{k_1, 0}|\right)^2-g.
$$
\end{proposition}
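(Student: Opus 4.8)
The plan is to start from the exact cardinality formula (\ref{CardinalityPuregaps}) of Proposition \ref{Puregapsymmetry}, namely $|G_0(P_1, P_2)| = \sum_{0\le k}(k+1)|G_{k,0}|$, and then bound each term $|G_{k,0}|$ from below and above. The decomposition (\ref{G_k0}) writes $G_{k,0}$ as a \emph{disjoint} union of the four sets $G_{k,0}^1, \dots, G_{k,0}^4$, so $|G_{k,0}| = \sum_{i=1}^{4}|G_{k,0}^i|$, and I would combine this with the exact value of $|G_{k,0}^1|$ from Lemma \ref{card G10} and the inequalities of Remark \ref{G2 G3 G4}. For brevity write $S_k := \sum_{k<k_1}|\Gamma_{k_1,0}|$, so that Lemma \ref{card G10} reads $|G_{k,0}^1| = S_k^2$ and $\sum_{k\le k_1}|\Gamma_{k_1,0}| = |\Gamma_{k,0}| + S_k$.

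For the lower bound, I would simply discard the nonnegative contributions: since $|G_{k,0}^2|, |G_{k,0}^3|, |G_{k,0}^4| \ge 0$ by Remark \ref{G2 G3 G4}, disjointness gives $|G_{k,0}| \ge |G_{k,0}^1| = S_k^2$. Substituting into (\ref{CardinalityPuregaps}) yields at once $|G_0(P_1,P_2)| \ge \sum_{0\le k}(k+1)S_k^2$, which is the claimed lower bound.

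For the upper bound, I would add the three upper estimates of Remark \ref{G2 G3 G4} to the exact value $|G_{k,0}^1| = S_k^2$, obtaining
$$
|G_{k,0}| \le S_k^2 + \left(|\Gamma_{k,0}|^2 - |\Gamma_{k,0}|\right) + 2\,|\Gamma_{k,0}|\,S_k .
$$
The key algebraic observation is that $S_k^2 + |\Gamma_{k,0}|^2 + 2|\Gamma_{k,0}|S_k = (|\Gamma_{k,0}| + S_k)^2$, so the right-hand side equals $(|\Gamma_{k,0}| + S_k)^2 - |\Gamma_{k,0}|$. Weighting by $(k+1)$ and summing over $k$ then gives
$$
|G_0(P_1,P_2)| \le \sum_{0\le k}(k+1)\left(|\Gamma_{k,0}| + S_k\right)^2 - \sum_{0\le k}(k+1)|\Gamma_{k,0}| .
$$
To finish, I would invoke the genus identity (\ref{Genus-Gamma}), $\sum_{0\le k}(k+1)|\Gamma_{k,0}| = g$, to replace the second sum by $g$, and rewrite $|\Gamma_{k,0}| + S_k = \sum_{k\le k_1}|\Gamma_{k_1,0}|$ in the first sum, which produces exactly the stated upper bound.

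The whole argument is little more than bookkeeping over the decomposition (\ref{G_k0}); the one point that actually makes it work is recognizing that the four per-block bounds collapse into the perfect square $(|\Gamma_{k,0}| + S_k)^2$ minus the single linear term $|\Gamma_{k,0}|$, since it is precisely this linear term, weighted and summed, that the genus relation (\ref{Genus-Gamma}) turns into the clean $-g$ correction. The only things one must double-check are that (\ref{G_k0}) is genuinely a disjoint union (so that cardinalities add) and that all the sums are finite — both are already guaranteed, the latter because $\Gamma_{k,0} = \emptyset$ for large $k$ by Remark \ref{empty}. I do not expect any serious obstacle beyond this.
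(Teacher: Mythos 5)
Your proposal is correct and follows essentially the same route as the paper's own proof: both use the disjoint decomposition (\ref{G_k0}) together with Lemma \ref{card G10} and Remark \ref{G2 G3 G4} to bound $|G_{k,0}|$ between $\bigl(\sum_{k<k_1}|\Gamma_{k_1,0}|\bigr)^2$ and $\bigl(\sum_{k\leq k_1}|\Gamma_{k_1,0}|\bigr)^2-|\Gamma_{k,0}|$, then sum via (\ref{CardinalityPuregaps}) and apply the genus identity (\ref{Genus-Gamma}). No gaps; your explicit remarks on disjointness and finiteness of the sums are points the paper leaves implicit.
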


\begin{proof}
By Lemma \ref{card G10} and Remark \ref{G2 G3 G4} we have that
$$
|G_{k, 0}|\leq \left(\sum_{k<k_1}|\Gamma_{k_1, 0}|\right)^2+|\Gamma_{k, 0}|^2-|\Gamma_{k, 0}|+2|\Gamma_{k ,0}|\left(\sum_{k<k_1}|\Gamma_{k_1, 0}|\right)=\left(\sum_{k\leq k_1}|\Gamma_{k_1, 0}|\right)^2-|\Gamma_{k, 0}|.
$$
So, from Equation (\ref{CardinalityPuregaps}),
$$
\sum_{0\leq k}(k+1)\left(\sum_{k<k_1}|\Gamma_{k_1, 0}|\right)^2\leq |G_0(P_1, P_2)|
$$
and
\begin{align*}
|G_0(P_1, P_2)|&\leq \sum_{0\leq k}(k+1)\left[\left(\sum_{k\leq k_1}|\Gamma_{k_1, 0}|\right)^2-|\Gamma_{k, 0}|\right]\\
&=\sum_{0\leq k}(k+1)\left(\sum_{k\leq k_1}|\Gamma_{k_1, 0}|\right)^2-\sum_{0\leq k}(k+1)|\Gamma_{k, 0}|\\
&=\sum_{0\leq k}(k+1)\left(\sum_{k\leq k_1}|\Gamma_{k_1, 0}|\right)^2-g.& \text{(from Equation (\ref{Genus-Gamma}))}
\end{align*}
\end{proof}

Note that, we can rewrite the sets $G_{k, 0}^1$, $G_{k, 0}^2$, $G_{k, 0}^3$ and $G_{k, 0}^4$ in terms of the sets $\Gamma_{k_1, 0}$. In fact, 
\begin{equation}\label{G^i_k0}
\begin{array}{l}
G_{k, 0}^1=\{\glb(\u+\w_{k_2-k}, \v): \u\in \Gamma_{k_2, 0}, \, \v \in \Gamma_{k_1 ,0}, \, k< k_1, \, k < k_2\},\\
G_{k, 0}^2=\{\glb(\u, \v): \u, \v\in \Gamma_{k, 0},  \, \u\not\preceq \v, \, \v\not\preceq \u\},\\
G_{k, 0}^3=\{\glb(\u, \v): \u\in \Gamma_{k, 0}, \, \v \in \Gamma_{k_1 ,0}, \, \u\not\preceq \v, \, k< k_1\}, \text{ and}\\
G_{k, 0}^4=\{\glb(\u+\w_{k_2-k}, \v): \u\in \Gamma_{k_2, 0}, \, \v \in \Gamma_{k ,0}, \, \v\not\preceq \u+\w_{k_2-k}, \, k< k_2\}.
\end{array}
\end{equation}

\begin{remark} \label{Gi vazio}
By Remark \ref{empty}, if $k\geq \ceil*{\frac{2g-1}{\pi}}$ then $G_{k, 0}^i = \emptyset$ for each $i=1,2,3,4$. In addition, from definition, if $k= \ceil*{\frac{2g-1}{\pi}} - 1$ then $G_{k, 0}^1 = G_{k, 0}^3 = G_{k, 0}^4 = \emptyset$. 
\end{remark}

To finish this section we present the following result.

\begin{lemma}\label{lemma_diagonal} 
If $\be \equiv \tau_{P_1, P_2} (\be) \pmod{\pi}$ for each $\be \in G(P_1)$, then
$$
G_{k, 0}^2=\emptyset\quad\text{and}\quad G_{k, 0}^4=\{(b, a)\in \N^2: (a, b)\in G_{k, 0}^3\}-\w_k
$$  
for each $k\in \N_0$. In particular, $|G_{k, 0}^3| = |G_{k, 0}^4|$.
\end{lemma}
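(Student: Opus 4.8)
The plan is to first turn the congruence hypothesis into a concrete parametrization of each block $\Gamma_{m,0}$ and then read both assertions off short $\glb$ computations. The reduction I would make at the outset is this: if $(\be, \tau_{P_1, P_2}(\be)) \in \Gamma_{m,0}$, then $m\pi < \be < (m+1)\pi$ and $0 < \tau_{P_1, P_2}(\be) < \pi$, so reducing both coordinates modulo $\pi$ turns the hypothesis $\be \equiv \tau_{P_1, P_2}(\be) \pmod{\pi}$ into the equality $\tau_{P_1, P_2}(\be) = \be - m\pi$. Thus every element of $\Gamma_{m,0}$ has the diagonal form $(m\pi + r, r)$ with $0 < r < \pi$, and this single observation reduces the two claims to bookkeeping.

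For $G_{k,0}^2 = \emptyset$ I would write two elements of $\Gamma_{k,0}$ as $\u = (k\pi + r_1, r_1)$ and $\v = (k\pi + r_2, r_2)$ and note that both coordinates are increasing in the single parameter $r$. Hence $r_1 \le r_2$ forces $\u \preceq \v$ and $r_2 \le r_1$ forces $\v \preceq \u$, so one of the two always holds and the defining condition $\u \not\preceq \v$, $\v \not\preceq \u$ can never be met; the set is therefore empty.

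For the identity relating $G_{k,0}^3$ and $G_{k,0}^4$ I would work from the reformulations in (\ref{G^i_k0}) and evaluate the two $\glb$'s using the diagonal form. A contributor to $G_{k,0}^3$ is $\glb(\u, \v)$ with $\u = (k\pi + a, a) \in \Gamma_{k,0}$, $\v = (k_1\pi + c, c) \in \Gamma_{k_1,0}$ and $k < k_1$; since $k < k_1$ the first coordinate of the $\glb$ is $k\pi + a$, the condition $\u \not\preceq \v$ collapses to $a > c$, and the resulting point is $(k\pi + a, c)$. A contributor to $G_{k,0}^4$ is $\glb(\u + \w_{k_2-k}, \v)$ with $\u = (k_2\pi + a', a') \in \Gamma_{k_2,0}$, $\v = (k\pi + b, b) \in \Gamma_{k,0}$ and $k < k_2$; here $\u + \w_{k_2-k} = (k\pi + a', a' + (k_2-k)\pi)$, the second coordinate of the $\glb$ reduces to $b$ because $a' + (k_2-k)\pi > \pi > b$, the condition $\v \not\preceq \u + \w_{k_2-k}$ collapses to $b > a'$, and the point is $(k\pi + a', b)$. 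Comparing these explicit descriptions, the flip-and-shift map $(x,y) \mapsto (y,x) - \w_k = (y + k\pi, x - k\pi)$ sends $(k\pi + a, c)$ to $(k\pi + c, a)$; matching $c$ with $a'$, $a$ with $b$, and the box index $k_1$ with $k_2$ then identifies $G_{k,0}^3$ with $G_{k,0}^4$. Being the composition of the coordinate flip with a fixed translation, this map is a bijection, which yields $|G_{k,0}^3| = |G_{k,0}^4|$.

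The computations are routine once the diagonal parametrization is in hand, and the step I expect to demand the most care is the bookkeeping in the last paragraph: I must check that the flip-and-shift genuinely swaps the roles of the two blocks — the far block $\Gamma_{k_1,0}$ feeding $\v$ in $G_{k,0}^3$ becoming the block $\Gamma_{k_2,0}$ feeding $\u$ in $G_{k,0}^4$ — that the incomparability conditions $a > c$ and $b > a'$ transform into one another, and that the image remains in $\N^2$.
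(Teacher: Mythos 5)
Your proof is correct and follows essentially the same route as the paper's: you derive the diagonal form $(m\pi+r,r)$ of the elements of $\Gamma_{m,0}$ from the congruence hypothesis, deduce that any two elements of $\Gamma_{k,0}$ are comparable (so $G_{k,0}^2=\emptyset$), and verify via explicit $\glb$ computations that the flip-and-shift map $(a,b)\mapsto(b,a)-\w_k$ carries $G_{k,0}^3$ bijectively onto $G_{k,0}^4$. Your write-up is only slightly more explicit than the paper's, which states one inclusion and calls the other analogous.
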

\begin{proof} Suppose that $\be \equiv \tau_{P_1, P_2} (\be) \pmod{\pi}$ for each $\be \in G(P_1)$. Let $\u = (\be_1, \tau_{P_1, P_2}(\be_1))$ and $\v = (\be_2, \tau_{P_1, P_2}(\be_2))$ in $\Gamma_{k, 0}$. So, $\be_1 = \tau_{P_1, P_2}(\be_1) + \ell_1 \pi$ and $\be_2 = \tau_{P_1, P_2}(\be_2) + \ell_2 \pi$ for some $\ell_1, \ell_2 \in \mathbb{Z}$. Since $k \pi < \be_1, \be_2 < (k+1) \pi$ and $0 < \tau_{P_1, P_2}(\be_1), \tau_{P_1, P_2}(\be_2) < \pi$, we have that $\ell_1 = \ell_2 = k$. Thus, if $\be_1 \leq \be_2$ (or $\be_2 \leq \be_1$), then $\tau_{P_1, P_2}(\be_1) \leq \tau_{P_1, P_2}(\be_2)$ (or $\tau_{P_1, P_2}(\be_2) \leq \tau_{P_1, P_2}(\be_1)$). Therefore, $\u\preceq \v$ or $\v\preceq \u$ for each $\u, \v\in \Gamma_{k, 0}$. This implies that $G_{k, 0}^2=\emptyset$.

On the other hand, let $(a, b)\in G_{k, 0}^3$. From (\ref{G^i_k0}), there exist elements $\u=(k\pi+j, j)\in \Gamma_{k, 0}$ and $\v=(k_1\pi+j_1, j_1)\in \Gamma_{k_1, 0}$, where $k<k_1$, such that $0<j_1<j<\pi$ and $(a, b)=\glb (\u, \v)=(k\pi+j, j_1)$. From Proposition \ref{Gammasymmetry}, $\v + \w_{k_1-k}\in \Gamma_{k, k_1-k}$. Moreover, $\u\npreceq \v+\w_{k_1-k}$. Thus, from (\ref{G^i_k0}) we obtain that 
$$
(b, a)-\w_k=(k\pi+j_1, j)=\glb(\u, \v+\w_{k_1-k})\in G_{k, 0}^4.
$$
This proves one inclusion. The other inclusion is analogous.
\end{proof}

\section{Pure gap set of some function fields}

In this section, as an application of the results introduced in the previous section, we completely determine the pure gap set and its cardinality of some families of function fields.

\subsection{The GK function field}
Consider the curve introduced by Giulietti and Korchm\'{a}ros \cite{GK2009} given by the equations
$$
\mathcal{GK}:\left\{
\begin{array}{l}
Z^{q^2-q+1}=Y\sum_{i=0}^{q} (-1)^{i+1} X^{i(q-1)}\\
Y^{q+1}=X^{q}+X
\end{array} \right. .
$$
This curve is maximal over $\fqss$ and is the first example of a maximal curve not covered by the Hermitian curve. Let $K$ be the algebraic closure of $\mathbb{F}_{q^6}$, and consider $GK=K(x,y,z)$ with $z^{q^2-q+1}=y\sum_{i=0}^{q} (-1)^{i+1} x^{i(q-1)}$ and $y^{q+1}=x^{q}+x$ the function field of $\mathcal{GK}$ over $K$. The $GK$ function field has $q^8 - q^6 + q^5 + 1$ rational places over $\fqss$, a single place at infinity denoted by $P_{\infty}$, and its genus is $g = \frac{1}{2} (q^3 + 1)(q^2 - 2) + 1$.

Let $GK(\mathbb{F}_{q^6})$ be the set of rational places in $GK$. For $j=0,\ldots,q-1$, let $P_j = (a_j,0,0)\in GK(\mathbb{F}_{q^6})$ be such that $a_{j}^{q}+a_j=0$, where $a_0 = 0$ and so $P_0=(0,0,0)$. For $\ell=1,\ldots,q^3 - q$, let $Q_{\ell}=(a_{\ell},b_{\ell},0)\in GK(\mathbb{F}_{q^6})$ be such that $b_{\ell}\neq 0$ and $a_{\ell}^{q}+a_{\ell}=b_{\ell}^{q+1}$. In the following, $P_{\infty}$, $P_j$, for $j=0,\ldots,q-1$, and $Q_{\ell}$, for $\ell=1,\ldots,q^3 - q$, will be the places given above. For each $j=0,\ldots,q-1$, we have the following principal divisor
\begin{equation} \label{divisors}
 (x-a_j)  =  (q^3 + 1)P_j - (q^3+1)P_{\infty}.
\end{equation}

\begin{theorem} \label{theorem bitransitive} \cite[Theorem 7]{GK2009}
The set $GK(\mathbb{F}_{q^6})$ of rational places of $GK$ splits into two orbits under the action of ${\rm Aut}(GK)$. One orbit, say $\mathcal{O}_1$, has size $q^3+1$ and consists of the places $P_j$ and $Q_{\ell}$ as above together with the place at infinity $P_{\infty}$. The other orbit has size $q^3(q^3+1)(q^2-1)$ and consists of the places $P=(a,b,c) \in GK(\mathbb{F}_{q^6})$ with $c \neq 0$. Furthermore, ${\rm Aut}(GK)$ acts on $\mathcal{O}_1$ as ${\rm PGU}(3,q)$ in its doubly transitive permutation representation.
\end{theorem}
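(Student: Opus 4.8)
The plan is to realize $\mathcal{GK}$ as a cyclic cover of the Hermitian curve, read off the two candidate orbits from the ramification of this cover, and then supply the group action by lifting the automorphisms of the base; the genuine difficulty will be pinning down the \emph{full} automorphism group so that the number of orbits and the precise permutation action are forced.

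\textbf{Geometric reduction and orbit count.} First I would use the projection $\phi\colon\mathcal{GK}\to\mathcal{H}$, $(x,y,z)\mapsto(x,y)$, onto the Hermitian curve $\mathcal{H}\colon y^{q+1}=x^q+x$. Using the identity $\sum_{i=0}^q(-1)^{i+1}x^{i(q-1)}=(x^{q^2}-x)/(x^q+x)$, the defining equation becomes $z^{m}=v$ with $m=q^2-q+1$ and $v:=(x^{q^2}-x)/y^{q}$, exhibiting $\mathcal{GK}/\mathcal{H}$ as a Kummer extension of degree $m$. A routine computation on $\mathcal{H}$ gives $(v)=\sum_{j}P_j+\sum_{\ell}Q_\ell-q^3P_\infty$, so $v$ has a simple zero at each of the $q^3$ affine $\mathbb{F}_{q^2}$-rational points of $\mathcal{H}$ and a pole of order $q^3$ at $P_\infty$. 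Since $m\mid q^3+1$ we have $q^3\equiv-1\pmod m$, hence $\gcd(m,1)=\gcd(m,q^3)=1$ and the cover is totally ramified over every point of $\mathrm{supp}(v)$. Thus each of the $q^3+1$ rational points of $\mathcal{H}$ lies below a unique place of $\mathcal{GK}$, namely the $P_j$ and $Q_\ell$ (where $c=0$) together with $P_\infty$; this is $\mathcal{O}_1$, of size $q+(q^3-q)+1=q^3+1$. Subtracting from $q^8-q^6+q^5+1$ identifies the complementary set, of size $q^8-q^6+q^5-q^3=q^3(q^3+1)(q^2-1)$, as the places with $c\neq0$.

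\textbf{Producing the automorphisms and their orbits.} Next I would lift $\mathrm{Aut}(\mathcal{H})=\mathrm{PGU}(3,q)$ to $\mathcal{GK}$. The key technical point is that for each $\sigma\in\mathrm{PGU}(3,q)$ one has $\sigma^*v\equiv v$ modulo $m$-th powers of functions on $\mathcal{H}$; granting this, every $\sigma$ extends to $\mathcal{GK}$ by a rule $z\mapsto\lambda_\sigma z$, and together with the vertical automorphisms $z\mapsto\zeta z$ ($\zeta^m=1$) this yields a subgroup of $\mathrm{Aut}(\mathcal{GK})$ of order $(q^2-q+1)\cdot|\mathrm{PGU}(3,q)|$. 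Since $\mathrm{PGU}(3,q)$ acts $2$-transitively on $\mathcal{H}(\mathbb{F}_{q^2})$ (the Hermitian unital of isotropic points), the lifted group is transitive on $\mathcal{O}_1$, while the vertical cyclic group $C_m$ fixes $\mathcal{O}_1$ pointwise; hence the induced action on $\mathcal{O}_1$ is the classical $2$-transitive one. A fibrewise argument, in which $C_m$ permutes each length-$m$ fiber over a point with $v\neq0$ and the base group moves the fibers, then gives transitivity on the complement (the numerics are consistent, as $|\mathrm{PGU}(3,q)|=q^3(q^3+1)(q^2-1)$ equals the complement's size).

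\textbf{The main obstacle.} The hard part is proving that this explicit subgroup is \emph{all} of $\mathrm{Aut}(\mathcal{GK})$, so that there are exactly two orbits and the permutation group induced on $\mathcal{O}_1$ is \emph{precisely} $\mathrm{PGU}(3,q)$ rather than something larger. I would try to show that $C_m$ is normal: it is the kernel of the restriction $\mathrm{Aut}(\mathcal{GK})\to\mathrm{Aut}(\mathcal{H})$ once one knows that $K(x,y)$ is $\mathrm{Aut}(\mathcal{GK})$-invariant, whence $\mathrm{Aut}(\mathcal{GK})/C_m$ embeds into $\mathrm{Aut}(\mathcal{H})=\mathrm{PGU}(3,q)$; combined with the previous paragraph this forces equality and fixes the action on $\mathcal{O}_1$. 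Justifying the invariance of $K(x,y)$ (equivalently, that $C_m$ is characteristic) is the delicate step where real structural input is required — for example an upper bound on $|\mathrm{Aut}(\mathcal{GK})|$ obtained from the ramification of its large $p$-subgroup, or distinguishing $\mathcal{O}_1$ from its complement by their Weierstrass semigroups. This last device is attractive because it simultaneously prevents an automorphism from merging the two sets into a single orbit, thereby confirming that exactly two orbits occur.
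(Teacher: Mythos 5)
First, a point of reference: the paper does not prove this statement at all — it is quoted verbatim from \cite[Theorem 7]{GK2009} — so there is no internal argument to compare against. Your sketch essentially reconstructs the strategy of the original source: realize $\mathcal{GK}$ as the degree-$m$ Kummer cover $z^{m}=v$, $m=q^2-q+1$, $v=(x^{q^2}-x)/y^{q}$, of the Hermitian curve $\mathcal{H}$, read off $\mathcal{O}_1$ from the total ramification over $\mathcal{H}(\mathbb{F}_{q^2})$, and lift $\mathrm{PGU}(3,q)$. The parts you actually carry out (the polynomial identity, the divisor $(v)=\sum_j P_j+\sum_\ell Q_\ell-q^3P_\infty$, total ramification from $\gcd(m,q^3)=1$, and the two cardinalities) are correct.

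As a proof, however, three genuine gaps remain, two of which you flag and one you do not. (i) The lifting criterion ``$\sigma^*v\equiv v$ modulo $m$-th powers'' is left as a black box, but it can be closed cleanly: writing $(v)=S-(q^3+1)P_\infty$ with $S$ the $\sigma$-invariant sum of all points of $\mathcal{H}(\mathbb{F}_{q^2})$, one gets $(\sigma^*v/v)=(q^3+1)\left(P_\infty-\sigma^{-1}P_\infty\right)=m\cdot(q+1)\left(P_\infty-\sigma^{-1}P_\infty\right)$, and $(q+1)(P-P')$ is principal for any two rational points of the maximal curve $\mathcal{H}$ (tangent-line quotients); since $K$ is algebraically closed the remaining constant is an $m$-th power, so every $\sigma$ lifts. (ii) Your fibrewise argument for transitivity on the second orbit silently assumes that $\mathrm{PGU}(3,q)$ is transitive on $\mathcal{H}(\mathbb{F}_{q^6})\setminus\mathcal{H}(\mathbb{F}_{q^2})$ (equivalently, that the stabilizer of such a point is cyclic of order $q^2-q+1$); this is true but is itself a nontrivial fact about the unitary group that must be proved or cited — observing that ``the numerics are consistent'' only shows the orbit sizes are compatible, not that there is a single orbit. (iii) The step you rightly single out as the main obstacle — that the constructed group of order $(q^2-q+1)\,|\mathrm{PGU}(3,q)|$ exhausts $\mathrm{Aut}(\mathcal{GK})$, or at least that no automorphism fuses the two candidate orbits and that the group induced on $\mathcal{O}_1$ is no larger than $\mathrm{PGU}(3,q)$ — is described but not executed. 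Your idea of separating the orbits by Weierstrass semigroups is the right one (Proposition \ref{proposition H(P)} records the non-generic semigroup $\langle q^3-q^2+q,\,q^3,\,q^3+1\rangle$ at the places of $\mathcal{O}_1$, and one must verify that places with $c\neq 0$ have a different gap sequence), but without that verification, and without an argument forcing $\mathrm{Aut}(\mathcal{GK})/C_m$ to embed into $\mathrm{Aut}(\mathcal{H})$, the claims ``exactly two orbits'' and ``acts on $\mathcal{O}_1$ as $\mathrm{PGU}(3,q)$'' are not yet established.
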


\begin{proposition} \label{proposition H(P)} \cite[Proposition 1]{GK2009}
Let $P_j$ and $Q_{\ell}$ be as above. Then, $H(P_{\infty}) = H(P_j) = H(Q_{\ell}) = \langle q^3 - q^2 + q , q^3 , q^3 + 1 \rangle$, for each $j=1,\ldots, q-1$, and $\ell=1,\ldots,q^3-q$.
\end{proposition}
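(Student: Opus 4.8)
The plan is to first reduce the statement to a single computation at $P_\infty$, and then to identify the Weierstrass semigroup there by a gap-counting argument.

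\textbf{Reduction to $P_\infty$.} An automorphism of the function field $GK$ induces an isomorphism of Weierstrass semigroups: if $\sigma \in \aut(GK)$ and $P$ is a place, then $v_{\sigma(P)}(\sigma(f)) = v_P(f)$ for every $f$, so $H(P) = H(\sigma(P))$. By Theorem \ref{theorem bitransitive}, the places $P_\infty$, $P_j$ and $Q_\ell$ appearing in the statement all belong to the single orbit $\mathcal{O}_1$ of $\aut(GK)$. Hence $H(P_\infty) = H(P_j) = H(Q_\ell)$ for all such $j, \ell$, and it suffices to compute $H(P_\infty)$.

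\textbf{The inclusion $\langle q^3 - q^2 + q, q^3, q^3 + 1\rangle \subseteq H(P_\infty)$.} Here I would exhibit functions with a single pole at $P_\infty$ of each of the three required orders. From the principal divisor (\ref{divisors}) we read off $v_{P_\infty}(x) = -(q^3 + 1)$, so $q^3 + 1 \in H(P_\infty)$. Since $P_\infty$ is the only place at infinity, the relation $y^{q+1} = x^q + x$ gives $(q+1)\,v_{P_\infty}(y) = v_{P_\infty}(x^q + x) = -q(q^3+1)$; using $q^3 + 1 = (q+1)(q^2 - q + 1)$ this yields $v_{P_\infty}(y) = -(q^3 - q^2 + q)$, so $q^3 - q^2 + q \in H(P_\infty)$. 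Finally, the polynomial $\sum_{i=0}^q (-1)^{i+1} x^{i(q-1)}$ has degree $q(q-1)$ in $x$, so the relation $z^{q^2 - q + 1} = y \sum_{i=0}^q (-1)^{i+1} x^{i(q-1)}$ gives $(q^2 - q + 1)\,v_{P_\infty}(z) = -(q^3 - q^2 + q) - q(q-1)(q^3+1) = -q^3(q^2 - q + 1)$, whence $v_{P_\infty}(z) = -q^3$ and $q^3 \in H(P_\infty)$. As $\gcd(q^3, q^3+1) = 1$, the semigroup $S := \langle q^3 - q^2 + q, q^3, q^3+1\rangle$ is numerical and contained in $H(P_\infty)$.

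\textbf{Equality via the genus.} To upgrade the inclusion to an equality I would count gaps. Since $S \subseteq H(P_\infty)$, the non-gaps of $S$ are non-gaps of $P_\infty$, so the gap set of $H(P_\infty)$ is contained in that of $S$; in particular $g = |\N_0 \setminus H(P_\infty)| \leq |\N_0 \setminus S|$. It therefore suffices to show that the numerical semigroup $S$ has exactly $g = \tfrac{1}{2}(q^3+1)(q^2 - 2) + 1$ gaps, since equality of the finite gap counts then forces $S = H(P_\infty)$. I expect this genus computation to be the main obstacle. My approach would be to determine the Apéry set $\mathrm{Ap}(S, q^3)$, that is, for each residue class modulo $q^3$ the least element of $S$ in that class, exploiting that $q^3 + 1 \equiv 1$ and $q^3 - q^2 + q \equiv -q(q-1) \pmod{q^3}$ to parametrize the classes, and then apply the formula $|\N_0 \setminus S| = \tfrac{1}{q^3}\sum_{w \in \mathrm{Ap}(S, q^3)} w - \tfrac{q^3 - 1}{2}$. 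The delicate part is the bookkeeping of which representatives $a(q^3-q^2+q) + c(q^3+1)$ are minimal in each residue class; once the Apéry sum is evaluated in closed form, comparing it with the stated value of $g$ completes the proof.
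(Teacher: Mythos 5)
This proposition is imported by the paper from \cite[Proposition 1]{GK2009}; the paper gives no proof of it, so there is no internal argument to compare against, and your outline is essentially the standard route. The parts you actually carry out are correct: the reduction to $P_\infty$ via Theorem \ref{theorem bitransitive} is legitimate (all the places in question lie in the single orbit $\mathcal{O}_1$, and automorphisms preserve Weierstrass semigroups), and the three valuation computations are right. Indeed $v_{P_\infty}(x)=-(q^3+1)$ from (\ref{divisors}); since $x$ has its only pole at $P_\infty$, the relation $y^{q+1}=x^q+x$ gives $(q+1)v_{P_\infty}(y)=-q(q^3+1)$, hence $v_{P_\infty}(y)=-q(q^2-q+1)=-(q^3-q^2+q)$; and the relation for $z$ gives $(q^2-q+1)v_{P_\infty}(z)=-(q^3-q^2+q)-q(q-1)(q^3+1)=-q^3(q^2-q+1)$, so $v_{P_\infty}(z)=-q^3$. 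Each of $x,y,z$ has poles only at $P_\infty$, so $S:=\langle q^3-q^2+q,\,q^3,\,q^3+1\rangle\subseteq H(P_\infty)$, and the logic of the final step (equality follows once $|\N_0\setminus S|=g$, since $G(P_\infty)\subseteq \N_0\setminus S$ and $|G(P_\infty)|=g$) is sound.

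The genuine gap is that this final step, which you yourself identify as ``the main obstacle,'' is never executed: you do not verify that the numerical semigroup $S$ has exactly $\tfrac{1}{2}(q^3+1)(q^2-2)+1$ gaps, and that count is where all the content of the proposition lives. The Ap\'ery-set strategy you sketch would work, but as written the proof is a plan, not a proof. The computation does close (for $q=2$ one checks directly that $\langle 6,8,9\rangle$ has $10$ gaps, matching $g=10$), and the bookkeeping is made tractable by the relation $(q+1)(q^3-q^2+q)=q(q^3+1)$, which lets you normalize every element of $S$ as $\alpha(q^3-q^2+q)+\beta q^3+\gamma(q^3+1)$ with $0\leq \alpha\leq q$ before extracting the minimal representative of each residue class modulo $q^3$. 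Until that enumeration is carried out and summed in closed form, the argument establishes only the inclusion $S\subseteq H(P_\infty)$, not the stated equality.
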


The following result provides an explicit description of the minimal generating set $\Gamma(P_0, P_\infty)$.  We will concentrate our results in the case $P_1=P_0$ and $P_2 = P_{\infty}$ but, by Theorem \ref{theorem bitransitive}, the results also continue to be valid for any places $P_{(a,b,0)} \in GK(\mathbb{F}_{q^6})$. That is, we can exchange the places $P_0$ and $P_{\infty}$ for any places on the orbit $\mathcal{O}_1$ given in Theorem \ref{theorem bitransitive}.

\begin{theorem}\cite[Theorem 3.4]{CT2016}\label{GammaGKcurve}
Let $q\geq 3$ and define the sets
\begin{align*}
&A_1=\{\gamma_{i, j, k} : 1\leq k \leq q-1, \, 0\leq i \leq k, \, k-i+1\leq j \leq q^2-q\},\\
&A_2=\{\gamma_{i, j, k} : 1\leq k \leq q-1, \, k+1\leq i \leq q, \, 0\leq j \leq q^2-q\},\\ 
&A_3=\{\gamma_{i, j, k} : q\leq k \leq q^2-q-2, \, 0\leq i \leq q, \, k-i+1\leq j \leq q^2-q\}, \text{ and}\\
&A_4=\{\gamma_{i, j, k} : q^2-q-1\leq k \leq q^2-1, \, k-q^2+q+1\leq i \leq q, \, k-i+1\leq j \leq q^2-q\}, 
\end{align*}
where 
$$\gamma_{i, j, k}=((k-1)(q^3+1)+(q+1-i)(q^2-q+1)-j, (i+j-k-1)(q^3+1)+(q+1-i)(q^2-q+1)-j).
$$
Then 
$$
\Gamma(P_0, P_{\infty})=A_1\cup A_2\cup A_3\cup A_4.
$$
\end{theorem}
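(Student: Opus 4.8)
The plan is to realize each pair $\gamma_{i,j,k}$ by an explicit function whose pole divisor is supported only on $P_0$ and $P_\infty$, and then to show that the resulting pairs are exactly the graph of the bijection $\tau_{P_0,P_\infty}$ from (\ref{tau}). First I would record the local data at the two places. By (\ref{divisors}) with $a_0=0$ we have $(x)=(q^3+1)P_0-(q^3+1)P_\infty$, so $x$ has its only zero at $P_0$ and its only pole at $P_\infty$, with $v_{P_0}(x)=q^3+1$. From $y^{q+1}=x^q+x$ and $z^{q^2-q+1}=y\sum_{i=0}^q(-1)^{i+1}x^{i(q-1)}$ one computes $v_{P_\infty}(y)=-(q^3-q^2+q)$ and $v_{P_\infty}(z)=-q^3$, matching the generators of $H(P_\infty)$ in Proposition \ref{proposition H(P)}, while $v_{P_0}(y)=q^2-q+1$ and $v_{P_0}(z)=1$; crucially, $y$ and $z$ have no poles away from $P_\infty$. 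The key numerical identity to keep in mind is $q^3+1=(q+1)(q^2-q+1)$.

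The heart of the argument is the family $f_{i,j,k}:=y^i z^j x^{-k}$. Since $y$ and $z$ have poles only at $P_\infty$ and $x^{-k}$ has a pole only at $P_0$ (the unique zero of $x$), each $f_{i,j,k}$ has poles only at $P_0$ and $P_\infty$. A direct valuation count gives pole orders
\[
-v_{P_0}(f_{i,j,k})=k(q^3+1)-i(q^2-q+1)-j,\qquad -v_{P_\infty}(f_{i,j,k})=i(q^3-q^2+q)+jq^3-k(q^3+1),
\]
and, using $q^3+1=(q+1)(q^2-q+1)$, these are exactly the two coordinates of $\gamma_{i,j,k}$. The inequalities defining $A_1,\dots,A_4$ force $k\geq 1$ and $i+j\geq k+1$, which is precisely what guarantees that both pole orders are positive; hence $f_{i,j,k}$ has pole divisor $\gamma_{i,j,k}$, so $\gamma_{i,j,k}\in H(P_0,P_\infty)$ and in particular $\tau_{P_0,P_\infty}(\beta)\leq\gamma$ whenever $(\beta,\gamma)=\gamma_{i,j,k}$.

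To identify these pairs with $\Gamma(P_0,P_\infty)$ I would use that $\{z^a y^b: 0\leq a\leq q^2-q,\ 0\leq b\leq q\}$ is a basis of $GK$ as a $K(x)$-module (the degrees multiply to $(q^2-q+1)(q+1)=q^3+1$), together with the fact that the values $a+b(q^2-q+1)=v_{P_0}(z^ay^b)$ are pairwise distinct modulo $q^3+1$, by a base-$(q^2-q+1)$ digit count. This incongruence means that in any $K(x)$-combination no two monomials share a valuation at $P_0$ (or at $P_\infty$), so the valuation of a sum equals the minimum of the valuations of its terms. Two consequences follow. On the one hand, a gap $\beta\in G(P_0)$ determines, through its residue modulo $q^3+1$, a unique pair $(i,j)$ and then a unique height $k$, which shows that the first coordinates of the $\gamma_{i,j,k}$ run bijectively over $G(P_0)$ exactly as $(i,j,k)$ ranges over $A_1\cup\cdots\cup A_4$. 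On the other hand, any function with pole order $\beta$ at $P_0$ must contain $f_{i,j,k}$ as its $P_0$-leading term and therefore has $P_\infty$-pole at least the second coordinate of $\gamma_{i,j,k}$; this upgrades the bound of the previous paragraph to the equality $\tau_{P_0,P_\infty}(\beta)=\gamma$, whence $\Gamma(P_0,P_\infty)=A_1\cup\cdots\cup A_4$. (Alternatively, minimality can be obtained from the bijectivity of $\tau_{P_0,P_\infty}$: since $\mathrm{Aut}(GK)$ acts doubly transitively on $\mathcal{O}_1$ by Theorem \ref{theorem bitransitive}, some automorphism interchanges $P_0$ and $P_\infty$, so $\Gamma(P_0,P_\infty)$ is symmetric under coordinate swap, and comparing $\sum_\beta \gamma$ with $\sum_{\gamma\in G(P_\infty)}\gamma$ forces equality termwise.)

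The main obstacle is the combinatorial step that the map $(i,j,k)\mapsto k(q^3+1)-i(q^2-q+1)-j$ is a bijection from $A_1\cup\cdots\cup A_4$ onto $G(P_0)$. Concretely, after fixing the residue $(i,j)$ one must determine the exact range of heights $k$ that produce a gap rather than a non-gap of $\langle q^3-q^2+q,\,q^3,\,q^3+1\rangle$, and the four regions $A_1,\dots,A_4$ are precisely the bookkeeping of the four shapes this range takes as $(i,j)$ varies across the staircase of the semigroup. Verifying that these regions tile $G(P_0)$ with neither overlap nor omission (equivalently, that their total cardinality is $g$) is where the real work lies; the no-cancellation lemma underlying the valuation estimates, although standard, also requires checking that the total ramification of $P_0$ and $P_\infty$ does not disturb the term-by-term comparison.
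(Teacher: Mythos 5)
The paper gives no proof of this statement: it is imported verbatim as \cite[Theorem 3.4]{CT2016}, so there is no internal argument to compare yours against; I can only judge your proposal on its own terms. Your preparatory work is correct and is surely the intended route: the valuations $v_{P_0}(x)=q^3+1$, $v_{P_0}(y)=q^2-q+1$, $v_{P_0}(z)=1$, $v_{P_\infty}(y)=-(q^3-q^2+q)$, $v_{P_\infty}(z)=-q^3$ all check out, the pole pair of $y^iz^jx^{-k}$ is exactly $\gamma_{i,j,k}$, and the inequalities in $A_1,\dots,A_4$ do force both coordinates positive, so $A_1\cup\cdots\cup A_4\subseteq H(P_0,P_\infty)$. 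But the proof is not complete, for two reasons.

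First, your main minimality argument has a genuine flaw. Writing $h=\sum_{a,b}c_{a,b}(x)z^ay^b$ and isolating the $P_0$-leading term only tells you $v_{x=0}(c_{j,i})=-k$; it says nothing about $v_{P_\infty}(c_{j,i}(x))$, so it does not bound the pole of $h$ at $P_\infty$ from below. The conclusion would follow if $c_{j,i}$ were forced to be a Laurent polynomial in $x$ (top degree $\geq$ bottom degree), but that requires $\{z^ay^b\}$ to be an integral basis away from $x=\infty$; your mod-$(q^3+1)$ no-cancellation argument only rules out cancellation at the totally ramified places $P_0,P_\infty$, not at the other finite places (e.g.\ the fibers containing the $Q_\ell$), where distinct monomials can well cancel a pole of some $c_{a,b}$. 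Your parenthetical alternative --- $\tau_{P_0,P_\infty}(\be)\leq\gamma$ for every candidate pair, plus the fact that both coordinate projections of the candidate set are bijections onto $G(P_0)$ and $G(P_\infty)$ respectively, plus termwise comparison of $\sum_\be\tau_{P_0,P_\infty}(\be)$ with $\sum_{\gamma\in G(P_\infty)}\gamma$ --- is sound and is the cleaner way out; note that the surjectivity onto $G(P_\infty)$ reduces to checking that $(i,j,k)\mapsto(i,j,i+j-k)$ permutes the index set, which you should state and verify. Second, and more importantly, the combinatorial core --- that $(i,j,k)\mapsto k(q^3+1)-i(q^2-q+1)-j$ maps $A_1\cup\cdots\cup A_4$ bijectively onto the gap set of $\langle q^3-q^2+q,\,q^3,\,q^3+1\rangle$ --- is exactly the content of the theorem, and you explicitly defer it (``where the real work lies''). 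Without that verification the statement is not proved; everything before it only shows containment in the semigroup, not that the list is the graph of $\tau_{P_0,P_\infty}$.
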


In order to calculate the pure gap set $G_0(P_0, P_\infty)$, we are going to rewrite the set $\Gamma(P_0, P_\infty)$. With the notation introduced in Theorem \ref{GammaGKcurve}, note that
$$
A_1\cup A_2=\{\gamma_{i, j, k}: 1\leq k\leq q-1, \, 0\leq i \leq q, \, \max\{0, k-i+1\}\leq j \leq q^2-q \},
$$
$$
A_1\cup A_2 \cup A_3=\{\gamma_{i, j, k}: 1\leq k\leq q^2-q-2, \, 0\leq i \leq q, \, \max\{0, k-i+1\} \leq j \leq q^2-q \},
$$
and therefore
\begin{align}\label{Gamma(P0,Pinf) of GK}
\Gamma(P_0, P_\infty)=\{\gamma_{i, j, k}: & \, 1\leq k\leq q^2-1, \, \max\{0, k-q^2+q+1\}\leq i \leq q, \,\\
& \text{and } \max\{0, k-i+1\}\leq j \leq q^2-q\}.\nonumber
\end{align}
We observe that this new description of the minimal generating set $\Gamma (P_0, P_\infty)$ is also valid for the case $q=2$ since the result obtained coincides with the one given in \cite[Section III]{CT2016}.

\begin{proposition}\label{periodGKcurve}
The period of the semigroup $H(P_0, P_\infty)$ is $\pi = q^3+1$.
\end{proposition}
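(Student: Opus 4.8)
The plan is to read the period directly as the order of the class of $P_0-P_\infty$ in the divisor class group, and to pin this order down by combining the divisor relation (\ref{divisors}) with the explicit Weierstrass semigroup $H(P_\infty)$ from Proposition \ref{proposition H(P)}. For the upper bound $\pi\le q^3+1$, I would specialize (\ref{divisors}) to $j=0$, where $a_0=0$ and $P_0=(0,0,0)$, obtaining $(x)=(q^3+1)P_0-(q^3+1)P_\infty$. Thus $(q^3+1)(P_0-P_\infty)$ is principal, so $\pi$ divides $q^3+1$ and in particular $\pi\le q^3+1$.

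For the lower bound I would argue by contradiction: suppose $1\le k<q^3+1$ and $(h)=kP_0-kP_\infty$ for some $h\in GK$. Then $h$ has pole divisor exactly $kP_\infty$, so $k\in H(P_\infty)=\langle q^3-q^2+q,\,q^3,\,q^3+1\rangle$. A short numerical check shows that the only non-gaps in the open interval $(0,q^3+1)$ are the two generators $q^3-q^2+q$ and $q^3$, since every other element of the semigroup is at least $2(q^3-q^2+q)$, and $2(q^3-q^2+q)-(q^3+1)=(q-1)(q^2-q+1)>0$ for $q\ge2$. So it suffices to rule out $k=q^3-q^2+q$ and $k=q^3$.

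To eliminate these, I would compute the valuation at $P_0$ of the functions realizing the two pole orders, using that $h\in\cL(kP_\infty)$ with small dimension. From $y^{q+1}=x^q+x$ and $v_{P_0}(x)=q^3+1$ one gets $v_{P_0}(y)=q^2-q+1$, while $y$ has pole order $q^3-q^2+q$ at $P_\infty$; since $\ell((q^3-q^2+q)P_\infty)=2$ with basis $\{1,y\}$, any candidate is $h=\alpha y+\gamma$, whose valuation at $P_0$ is $0$ (if $\gamma\neq0$) or $q^2-q+1$ (if $\gamma=0$), never $q^3-q^2+q$. For the case $k=q^3$, evaluating $S(x)=\sum_{i=0}^{q}(-1)^{i+1}x^{i(q-1)}$ at $x=0$ gives $S(0)=-1\neq0$, so from $z^{q^2-q+1}=yS(x)$ I obtain $v_{P_0}(z)=1$, and a pole-order count at $P_\infty$ yields $v_{P_\infty}(z)=-q^3$. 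As $\ell(q^3P_\infty)=3$ with basis $\{1,y,z\}$, any candidate is $h=\alpha z+\beta y+\gamma$; forcing $h(P_0)=0$ makes $\gamma=0$ and hence $v_{P_0}(h)=\min\{1,\,q^2-q+1\}=1\neq q^3$. Both cases are contradictory, so no $k<q^3+1$ works and $\pi=q^3+1$.

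The main obstacle I anticipate is the bookkeeping in the last step: correctly identifying $z$ (rather than some other combination) as the function of pole order exactly $q^3$ at $P_\infty$, and verifying $v_{P_0}(z)=1$, which hinges entirely on the non-vanishing $S(0)=-1$. Everything else reduces to the numerical-semigroup observations above together with the dimension counts $\ell(kP_\infty)=\#\{\text{non-gaps}\le k\}$ supplied by Proposition \ref{proposition H(P)}.
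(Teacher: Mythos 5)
Your proof is correct, but the lower-bound step takes a genuinely different route from the paper's. You share with the paper the upper bound $\pi\le q^3+1$ from $(x)=(q^3+1)(P_0-P_\infty)$ and the observation that $\pi\in H(P_\infty)$; but where you then enumerate the non-gaps in $(0,q^3+1)$ (only $q^3-q^2+q$ and $q^3$, since $2(q^3-q^2+q)>q^3+1$) and kill each candidate by computing $v_{P_0}$ on explicit bases of $\cL((q^3-q^2+q)P_\infty)$ and $\cL(q^3P_\infty)$, the paper avoids all of this with one multiplicative trick: if $(w)=\pi(P_0-P_\infty)$ with $\pi\le q^3$, then $(xw^{-1})=(q^3+1-\pi)(P_0-P_\infty)$ shows $q^3+1-\pi$ is \emph{also} a nonzero element of $H(P_\infty)$, and since $\pi\ge q^3-q^2+q$ forces $0<q^3+1-\pi\le q^2-q+1<q^3-q^2+q$, this contradicts the minimality of the first nonzero non-gap with no case analysis and no valuations at $P_0$. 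Your approach buys an explicit identification of which functions realize the small pole orders and why they fail to have the required zero at $P_0$ (hinging on $v_{P_0}(y)=q^2-q+1$, $v_{P_0}(z)=1$, and $S(0)=-1$, all of which you compute correctly), at the cost of importing the dimension counts $\ell(kP_\infty)$ and their bases; the paper's argument is shorter, uses only the single generator $q^3-q^2+q$, and is the template it reuses verbatim for Kummer extensions in Proposition \ref{periodKummer}, so it generalizes more readily.
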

\begin{proof}
From (\ref{divisors}) we have $(x)=(q^3+1)(P_0-P_\infty)$, therefore $\pi \leq q^3+1$. Suppose that $1\leq \pi\leq q^3$ and let $w\in \fqss(GK)$ be the function such that $(w)=\pi(P_0-P_\infty)$. From Proposition \ref{proposition H(P)}, the smallest non-zero element of $H(P_\infty)$ is $q^3-q^2+q$, and since $\pi\in H(P_\infty)$ we obtain $q^3-q^2+q\leq \pi \leq q^3$. Furthermore, since
$$
(xw^{-1})=(q^3+1-\pi)(P_0-P_\infty),
$$ 
we conclude that $q^3+1-\pi\in H(P_\infty)$, where $0<q^3+1-\pi\leq q^2-q+1<q^3-q^2+q$, a contradiction. This implies that $\pi=q^3+1$.
\end{proof}


By the previous result and Remark \ref{empty}, we have that $\Gamma_{k, 0}= \emptyset$ for $k \geq q^2 -1$. Furthermore, from the definition of $\gamma_{i, j, k}$ and Proposition \ref{periodGKcurve}, we have that $\gamma_{i, j, k}\in \Gamma_{k-1, 0}$ if and only if $j=k-i+1$. Therefore, for $1\leq k \leq q^2-1$,
\begin{align*}
\Gamma_{k-1, 0}&=\{\gamma_{i, j, k}: \max\{0, k-q^2+q+1\}\leq i \leq q, \, \max\{0, k-i+1\}\leq j \leq q^2-q\}\\
&=\{\gamma_{i, j, k}: \max\{0, k-q^2+q+1\}\leq i \leq q, \, k-q^2+q+1\leq i \leq k+1\}\\
&=\{\gamma_{i, k-i+1, k}: \max\{0, k-q^2+q+1\}\leq i \leq \min\{q, k+1\}\}.
\end{align*}
Thus 
$$
\Gamma_{k, 0}=\{\gamma_{i, k-i+2, k+1}: \max\{0, k-q^2+q+2\}\leq i \leq \min\{q, k+2\}\}\text{ for }0\leq k \leq q^2-2.
$$

So, 
\begin{equation} \label{card gamma}
|\Gamma_{k, 0}|=\left\{\begin{array}{ll}
k+3, & \text{if }0\leq k \leq q-2,\\
q+1, & \text{if }q-1\leq k \leq q^2-q-3,\\
q^2-1-k, & \text{if }q^2-q-2\leq k \leq q^2-2.
\end{array}\right.
\end{equation}
Then, by Proposition \ref{cota pure gaps}, we have
$$
\begin{array}{lll}
|G_0(P_0, P_{\infty})| & \leq & \displaystyle \sum_{0\leq k}(k+1)\left(\sum_{k\leq k_1}|\Gamma_{k_1, 0}|\right)^2 - g \\
& = & \displaystyle \sum_{k=0}^{q-2} (k+1) \left( \sum_{k_1=k}^{q-2} (k_1 + 3) + \sum_{k_1=q-1}^{q^2-q-3} (q + 1) + \sum_{k_1=q^2-q-2}^{q^2-2} (q^2 - 1 - k_1) \right)^2\\
&  & + \displaystyle \sum_{k=q-1}^{q^2-q- 3} (k+1) \left(\sum_{k_1=k}^{q^2-q-3} (q + 1) + \sum_{k_1=q^2-q-2}^{q^2-2} (q^2 - 1 - k_1) \right)^2\\
&  & + \displaystyle \sum_{k=q^2-q- 2}^{q^2-2} (k+1) \left( \sum_{k_1=q^2-q-2}^{q^2-2} (q^2 - 1 - k_1)\right)^2 \\
& & \\
& & -  \left( \dfrac{1}{2} (q^3 + 1)(q^2 - 2) + 1 \right)\\
& & \\
& = &  \dfrac{10 q^{10} - 15q^8 - 4 q^7 + 20 q^6 - 56 q^5 - 35 q^4 + 124 q^3 - 40 q^2 - 4q}{120}.
\end{array}
$$
The last equality is obtained using the SageMath program \cite{SAGE}. We observe that this bound is better than the one given in (\ref{cota Homma}) for $q\geq 3$.

Now, using the description of the set $\Gamma_{k, 0}$, the relation (\ref{G_k0}) and Proposition \ref{Puregapsymmetry}, in the following lemmas we completely determine the sets $G_{k, 0}^1, G_{k, 0}^2, G_{k, 0}^3$ and $G_{k, 0}^4$ given in (\ref{G^i_k0}), and consequently we get $G_0(P_0, P_\infty)$. We observe that, by Remark \ref{Gi vazio}, if $k \geq q^2 - 2$ then $G_{k, 0}^1 = G_{k, 0}^3 = G_{k, 0}^4 = \emptyset$, and if $k \geq q^2 - 1$ then $G_{k, 0}^2 = \emptyset$.

\begin{lemma} \label{lemma 1}
For $0\leq k\leq q^2-3$ we have that
\begin{align*}
& G_{k, 0}^1=\left\{\gamma_{i_1, k_1-i_1+2, 1}^{i_2, k_2-i_2+2, k+1}:\begin{array}{l}
\max\{0, k_s-q^2+q+2\}\leq i_s \leq \min\{q, k_s+2\}\\
\text{and }k<k_s\leq q^2-2 \text{ for } s=1, 2 
\end{array}\right\} \text{ and }\\
&G_{k, 0}^3=\left\{\gamma_{i_1, k_1-i_1+2, 1}^{i_2, k-i_2+2, k+1}:\begin{array}{l}
\max\{0, k_1-q^2+q+2\}\leq i_1 \leq \min\{q, k_1+2\},\\
\max\{0, k-q^2+q+2\}\leq i_2 \leq \min\{q, k+2\},\\
i_2\leq i_1\text{ and }k<k_1\leq q^2-2 
\end{array}\right\},
\end{align*}
where 
$$
\gamma_{a_1, b_1, c_1}^{a_2, b_2, c_2}=((c_2-1)(q^3+1)+(q+1-a_2)(q^2-q+1)-b_2, (c_1-1)(q^3+1)+(q+1-a_1)(q^2-q+1)-b_1).
$$
\end{lemma}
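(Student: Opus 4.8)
The plan is to unwind the definitions of $G_{k,0}^1$ and $G_{k,0}^3$ given in (\ref{G^i_k0}) using the explicit parametrization $\Gamma_{k,0}=\{\gamma_{i,k-i+2,k+1}:\max\{0,k-q^2+q+2\}\le i\le\min\{q,k+2\}\}$, and to evaluate each $\glb$ coordinate by coordinate, using the block in which each point lives to decide which coordinate attains the minimum.

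First I record the coordinate data. For $\gamma_{i,k-i+2,k+1}\in\Gamma_{k,0}$ one reads off from the definition of $\gamma_{i,j,k}$ the first coordinate $k(q^3+1)+(q+1-i)(q^2-q+1)-(k-i+2)$ and the second coordinate $\tau(i,k):=(q+1-i)(q^2-q+1)-(k-i+2)=(q+1)(q^2-q+1)-2-i\,q(q-1)-k$; in particular the first coordinate equals $k\pi+\tau(i,k)$, and $\tau(i,k)$ is strictly decreasing in both $i$ and $k$. Block membership then says that a point of $\Gamma_{k,0}$ has first coordinate in $(k\pi,(k+1)\pi]$ and second coordinate in $(0,\pi]$, while by Proposition \ref{Gammasymmetry} a point of $\Gamma_{k_2,0}+\w_{k_2-k}=\Gamma_{k,k_2-k}$ has first coordinate in $(k\pi,(k+1)\pi]$ and second coordinate in $((k_2-k)\pi,(k_2-k+1)\pi]$.

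For $G_{k,0}^1$ I take $\u=\gamma_{i_2,k_2-i_2+2,k_2+1}\in\Gamma_{k_2,0}$ and $\v=\gamma_{i_1,k_1-i_1+2,k_1+1}\in\Gamma_{k_1,0}$ with $k<k_1,k_2$, and compute $\u+\w_{k_2-k}$ from $\w_{k_2-k}=(-(k_2-k)\pi,(k_2-k)\pi)$. Since $k<k_1$, the first coordinate of $\u+\w_{k_2-k}$ (lying in $(k\pi,(k+1)\pi]$) is smaller than that of $\v$; and since $k<k_2$, the second coordinate of $\u+\w_{k_2-k}$ (exceeding $(k_2-k)\pi\ge\pi$) is larger than that of $\v$ (lying in $(0,\pi]$). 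Hence $\glb(\u+\w_{k_2-k},\v)$ is the pair consisting of the first coordinate of $\u+\w_{k_2-k}$ and the second coordinate of $\v$, which is exactly $\gamma^{i_2,k_2-i_2+2,k+1}_{i_1,k_1-i_1+2,1}$, and the index ranges transfer verbatim. No incomparability hypothesis enters here, as it is automatic and was already dropped in passing to (\ref{G^i_k0}).

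For $G_{k,0}^3$ I take $\u=\gamma_{i_2,k-i_2+2,k+1}\in\Gamma_{k,0}$ and $\v=\gamma_{i_1,k_1-i_1+2,k_1+1}\in\Gamma_{k_1,0}$ with $k<k_1$, and write $\be_{\u},\tau_{\u}$ and $\be_{\v},\tau_{\v}$ for their coordinates, so $\tau_{\u}=\tau(i_2,k)$ and $\tau_{\v}=\tau(i_1,k_1)$. Block membership gives $\be_{\u}\le(k+1)\pi\le k_1\pi<\be_{\v}$, so the first coordinate of $\glb(\u,\v)$ is $\be_{\u}$; as the first coordinates are already ordered, the condition $\u\not\preceq\v$ reduces to $\tau_{\u}>\tau_{\v}$, whence the second coordinate of $\glb(\u,\v)$ is $\tau_{\v}$ and $\glb(\u,\v)=\gamma^{i_2,k-i_2+2,k+1}_{i_1,k_1-i_1+2,1}$. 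It then remains to show that, under the stated ranges and $k<k_1$, the inequality $\tau_{\u}>\tau_{\v}$ is equivalent to $i_2\le i_1$. From the closed form, $\tau_{\u}-\tau_{\v}=(i_1-i_2)q(q-1)+(k_1-k)$, which is positive whenever $i_2\le i_1$ since $k_1-k\ge1$. The converse is the main obstacle, and I handle it by contradiction: if $i_2>i_1$ but $\tau_{\u}>\tau_{\v}$, then $k_1-k>(i_2-i_1)q(q-1)\ge q(q-1)$, so $k_1\ge k+q^2-q+1$; the lower range constraint then forces $i_1\ge k_1-q^2+q+2\ge k+3$, whereas the upper constraint gives $i_2\le\min\{q,k+2\}\le k+2$, so $i_1\ge k+3>k+2\ge i_2$, contradicting $i_2>i_1$. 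This yields $\u\not\preceq\v\Leftrightarrow i_2\le i_1$ and hence both inclusions for $G_{k,0}^3$, completing the proof.
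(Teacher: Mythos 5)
Your proof is correct and follows essentially the same route as the paper: substitute the explicit parametrization of $\Gamma_{k,0}$ into the definitions in (\ref{G^i_k0}) and evaluate each $\glb$ coordinatewise using block membership to decide which coordinate attains the minimum. You in fact supply more detail than the paper at the one nontrivial point, namely the converse direction of the equivalence $\u\not\preceq\v\Leftrightarrow i_2\le i_1$ for $G_{k,0}^3$, which the paper records as $(i_2-i_1)(q^2-q)<k_1-k\Leftrightarrow i_2\le i_1$ without spelling out that the index ranges exclude $i_2>i_1$.
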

\begin{proof}
We start by calculating the set $G_{k, 0}^1$. Let $k_1, k_2$ be positive integers such that $k<k_1, k_2 \leq q^2-2$. For $\gamma_{i_1, k_1-i_1+2, k_1+1}\in \Gamma_{k_1, 0}$ and $\gamma_{i_2, k_2-i_2+2, k_2+1}\in \Gamma_{k_2, 0}$ we have that
$$
\glb(\gamma_{i_2, k_2-i_2+2, k_2+1}+\w_{k_2-k}, \gamma_{i_1, k_1-i_1+2, k_1+1})=\gamma_{i_1, k_1-i_1+2, 1}^{i_2, k_2-i_2+2, k+1}.
$$
From (\ref{G^i_k0}), we conclude that
$$
G_{k, 0}^1=\left\{\gamma_{i_1, k_1-i_1+2, 1}^{i_2, k_2-i_2+2, k+1}:\begin{array}{l}
\max\{0, k_s-q^2+q+2\}\leq i_s \leq \min\{q, k_s+2\}\\
\text{and }k<k_s\leq q^2-2 \text{ for } s=1, 2 
\end{array}\right\}.
$$
Now, let $k_1$ be a positive integer such that $k<k_1\leq q^2-q$. For $\gamma_{i_2, k-i_2+2, k+1}\in \Gamma_{k, 0}$ and $\gamma_{i_1, k_1-i_1+2, k_1+1}\in \Gamma_{k_1, 0}$ we obtain that
\begin{align*}
\gamma_{i_2, k-i_2+2, k+1}\not\preceq \gamma_{i_1, k_1-i_1+2, k_1+1} &\Leftrightarrow (i_2-i_1)(q^2-q)<k_1-k\\
&\Leftrightarrow i_2\leq i_1.
\end{align*}
Assuming $i_2\leq i_1$, we deduce that
$$
\glb(\gamma_{i_2, k-i_2+2, k+1}, \gamma_{i_1, k_1-i_1+2, k_1+1})=\gamma_{i_1, k_1-i_1+2, 1}^{i_2, k-i_2+2, k+1}
$$
and therefore
$$
G_{k, 0}^3=\left\{\gamma_{i_1, k_1-i_1+2, 1}^{i_2, k-i_2+2, k+1}:\begin{array}{l}
\max\{0, k_1-q^2+q+2\}\leq i_1 \leq \min\{q, k_1+2\},\\
\max\{0, k-q^2+q+2\}\leq i_2 \leq \min\{q, k+2\},\\
i_2\leq i_1\text{ and }k<k_1\leq q^2-2 
\end{array}\right\}.
$$

\end{proof}

\begin{lemma} \label{lemma 2}
The following statements hold:
\begin{enumerate}[(i)]
\item  For $0\leq k\leq q^2-2$, $G_{k, 0}^2=\emptyset$.
\item For $0\leq k\leq q^2-3$, $G_{k, 0}^4=\{(b, a)\in \N^2: (a, b)\in G_{k, 0}^3\}-\w_k$, where $G_{k, 0}^3$ is given as in the previous lemma.
\end{enumerate}
\end{lemma}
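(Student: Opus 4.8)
The plan is to recognize that this lemma is precisely the specialization of the general Lemma~\ref{lemma_diagonal} to the pair $(P_1,P_2)=(P_0,P_\infty)$. That general statement already delivers both conclusions --- $G_{k, 0}^2=\emptyset$ and $G_{k, 0}^4=\{(b, a)\in \N^2: (a, b)\in G_{k, 0}^3\}-\w_k$ --- for \emph{every} $k\in\N_0$, under the single hypothesis that $\be\equiv\tau_{P_0,P_\infty}(\be)\pmod{\pi}$ for each $\be\in G(P_0)$, where by Proposition~\ref{periodGKcurve} the period is $\pi=q^3+1$. Thus the entire proof reduces to verifying this one congruence; once it holds, parts (i) and (ii) follow at once, and the restriction of the ranges to $k\leq q^2-2$ and $k\leq q^2-3$ is immaterial, since by Remark~\ref{Gi vazio} the sets in question are empty outside these ranges anyway.

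To verify the congruence, first I would recall that $\Gamma(P_0,P_\infty)$ is the graph of the bijection $\tau_{P_0,P_\infty}$, so every $\be\in G(P_0)$ occurs as the first coordinate of a unique element $\gamma_{i,j,k}\in\Gamma(P_0,P_\infty)$, and then $\tau_{P_0,P_\infty}(\be)$ is exactly the second coordinate of that same $\gamma_{i,j,k}$. Crucially, the explicit expression from Theorem~\ref{GammaGKcurve} is uniform across all four index sets $A_1,\dots,A_4$, so a single computation handles every gap. Writing
\[
\gamma_{i,j,k}=\bigl((k-1)(q^3+1)+(q+1-i)(q^2-q+1)-j,\ (i+j-k-1)(q^3+1)+(q+1-i)(q^2-q+1)-j\bigr),
\]
the two coordinates share the common summand $(q+1-i)(q^2-q+1)-j$, so their difference collapses to
\[
\be-\tau_{P_0,P_\infty}(\be)=\bigl[(k-1)-(i+j-k-1)\bigr](q^3+1)=(2k-i-j)(q^3+1),
\]
which is an integer multiple of $q^3+1=\pi$. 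Hence $\be\equiv\tau_{P_0,P_\infty}(\be)\pmod{\pi}$ for every $\be\in G(P_0)$, which is exactly the hypothesis of Lemma~\ref{lemma_diagonal}.

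Applying Lemma~\ref{lemma_diagonal} then gives both assertions directly, with $G_{k, 0}^3$ in the form already computed in Lemma~\ref{lemma 1}. I do not expect a genuine obstacle here: the computation is entirely mechanical, and the real content was isolated beforehand in Lemma~\ref{lemma_diagonal}. The one point that warrants a line of care is confirming that the parametrization $\gamma_{i,j,k}$ exhausts all gaps $\be\in G(P_0)$, so that the congruence is checked for every gap rather than merely for a convenient subfamily; this is immediate because $\Gamma(P_0,P_\infty)$ is by definition the full graph of $\tau_{P_0,P_\infty}$ and equals the union $A_1\cup A_2\cup A_3\cup A_4$ described in Theorem~\ref{GammaGKcurve}.
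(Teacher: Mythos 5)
Your proposal is correct and follows exactly the paper's own route: verify the congruence $\be\equiv\tau_{P_0,P_\infty}(\be)\pmod{q^3+1}$ from the explicit description of $\Gamma(P_0,P_\infty)$ and Proposition \ref{periodGKcurve}, then invoke Lemma \ref{lemma_diagonal}. Your explicit computation that the two coordinates of $\gamma_{i,j,k}$ differ by $(2k-i-j)(q^3+1)$ simply makes precise the step the paper leaves implicit.
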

\begin{proof}
From the description of $\Gamma (P_0, P_\infty)$ given in (\ref{Gamma(P0,Pinf) of GK}) and Proposition \ref{periodGKcurve} we have  
\begin{equation*}
\be \equiv \tau_{P_0, P_\infty} (\be) \pmod{q^3 + 1} \mbox{ for each } \be \in G(P_0).
\end{equation*}
The result follows directly from Lemma \ref{lemma_diagonal}.
\end{proof}

\begin{theorem} \label{card G0}
Let $G_{k,0}^{1}, G_{k,0}^{3}$ and $G_{k,0}^{4}$ be given in above lemmas. Then
$$
G_0(P_0, P_\infty) = \bigcup_{\substack{0 \leq j \leq k \leq q^2 -3}} \left( (G_{k,0}^{1} \cup G_{k,0}^{3} \cup G_{k,0}^{4}) + \w_{j}  \right)
$$
and
$$
|G_0(P_0, P_\infty)|=\frac{q(q-1)(10q^8+10q^7-25q^6-9q^5+71q^4-111q^3-86q^2+128q-12)}{120}.
$$
\end{theorem}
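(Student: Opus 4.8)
The plan is to establish the set identity first and then the cardinality formula, in both cases specializing the general machinery of Section~3 to the pair $P_0,P_\infty$ via the data already assembled for the $GK$ field. For the set identity, I would begin from \eqref{conjunto pure gaps}, which writes $G_0(P_0,P_\infty)=\bigcup_{0\le j\le k<\ceil*{\frac{2g-1}{\pi}}}\bigl(G_{k,0}+\w_j\bigr)$, and first pin down the range. Using $\pi=q^3+1$ (Proposition~\ref{periodGKcurve}) and $g=\tfrac12(q^3+1)(q^2-2)+1$, one computes $2g-1=(q^3+1)(q^2-2)+1$, so $\frac{2g-1}{\pi}=q^2-2+\frac{1}{q^3+1}$ and hence $\ceil*{\frac{2g-1}{\pi}}=q^2-1$; the union therefore runs over $0\le j\le k\le q^2-2$. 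I would then substitute the disjoint decomposition \eqref{G_k0}, $G_{k,0}=G_{k,0}^1\cup G_{k,0}^2\cup G_{k,0}^3\cup G_{k,0}^4$, and delete $G_{k,0}^2$, which is empty for every $0\le k\le q^2-2$ by Lemma~\ref{lemma 2}(i). Finally, to lower the top index to $q^2-3$ as in the statement, I would invoke Remark~\ref{Gi vazio}: since $q^2-2=\ceil*{\frac{2g-1}{\pi}}-1$, the three surviving sets $G_{q^2-2,0}^1,G_{q^2-2,0}^3,G_{q^2-2,0}^4$ are all empty, so the $k=q^2-2$ layer contributes nothing, giving exactly the claimed union.

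For the cardinality I would start from \eqref{CardinalityPuregaps}, which with the range above reads $|G_0(P_0,P_\infty)|=\sum_{k=0}^{q^2-2}(k+1)\,|G_{k,0}|$. Because the union \eqref{G_k0} is disjoint and $G_{k,0}^2=\emptyset$, we have $|G_{k,0}|=|G_{k,0}^1|+|G_{k,0}^3|+|G_{k,0}^4|$, and Lemma~\ref{lemma 2}(ii) gives $|G_{k,0}^4|=|G_{k,0}^3|$, so $|G_{k,0}|=|G_{k,0}^1|+2|G_{k,0}^3|$. The first term is immediate from Lemma~\ref{card G10}: setting $S(k):=\sum_{k<k_1}|\Gamma_{k_1,0}|$ we get $|G_{k,0}^1|=S(k)^2$, and $S(k)$ becomes an explicit piecewise polynomial in $k$ once the three-branch formula \eqref{card gamma} for $|\Gamma_{k_1,0}|$ is substituted into the partial sum. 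For the second term I would count the index triples $(i_1,i_2,k_1)$ describing $G_{k,0}^3$ in Lemma~\ref{lemma 1}. Here a short injectivity check is needed: the first coordinate of $\gamma^{i_2,k-i_2+2,k+1}_{i_1,k_1-i_1+2,1}$ is an affine function of $i_2$ with nonzero slope $-(q^2-q)$, and on the admissible domain the second coordinate determines $(i_1,k_1)$, so the assignment of triples to points is injective and $|G_{k,0}^3|=\sum_{k_1=k+1}^{q^2-2}\#\{(i_1,i_2):i_2\le i_1\}$, with $i_1,i_2$ ranging over the $\max/\min$ intervals prescribed by \eqref{card gamma}.

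It remains to evaluate $|G_0(P_0,P_\infty)|=\sum_{k=0}^{q^2-2}(k+1)\bigl(S(k)^2+2|G_{k,0}^3|\bigr)$. This forces a split of the outer sum in $k$, and of the inner sums in $k_1,i_1,i_2$, along the three regimes $0\le k\le q-2$, $q-1\le k\le q^2-q-3$, and $q^2-q-2\le k\le q^2-2$ coming from \eqref{card gamma}, followed by summation of the resulting polynomial blocks. I expect the main obstacle to be precisely this bookkeeping: deciding which $\max/\min$ branch is active in the double count $\#\{(i_1,i_2):i_2\le i_1\}$ for each pair of regimes of $(k,k_1)$, and then carrying the nested polynomial summations through without error. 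As in the upper-bound computation earlier in this section, I would discharge the final algebra with symbolic computation (SageMath \cite{SAGE}), which collapses the expression to the stated degree-ten polynomial $\dfrac{q(q-1)(10q^8+10q^7-25q^6-9q^5+71q^4-111q^3-86q^2+128q-12)}{120}$.
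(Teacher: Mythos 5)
Your proposal is correct and follows essentially the same route as the paper: the set identity via \eqref{conjunto pure gaps}, \eqref{G_k0}, Lemma \ref{lemma 2} and Remark \ref{Gi vazio} (with $\ceil*{\frac{2g-1}{\pi}}=q^2-1$), and the cardinality via $|G_{k,0}|=|G_{k,0}^1|+2|G_{k,0}^3|$, Lemma \ref{card G10}, the index count from Lemma \ref{lemma 1} split along the three regimes of \eqref{card gamma}, and a final symbolic summation. One minor point in your favour: your inner sum for $|G_{k,0}^3|$ correctly lets $k_1$ run up to $q^2-2$ (where $|\Gamma_{q^2-2,0}|=1$), whereas the paper's displayed sum stops at $q^2-3$ — evidently a typo, since its closed-form expressions and the $q=2$ example agree with the full range.
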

\begin{proof}
From (\ref{card gamma}) and Lemma \ref{card G10}, we get 
$$
|G_{k, 0}^1|=\left\{\begin{array}{ll}
\frac{1}{4}(2q^3 - k^2 - 7k - 10)^2, & \text{if }0\leq k \leq q-2,\\
\frac{1}{4}(2q^3 - 2(k + 4)q + q^2 - 2k + 3q - 4)^2, & \text{if }q-1\leq k \leq q^2-q-3,\\
\frac{1}{4}(q^4 - (2k + 3)q^2 + k^2 + 3k + 2)^2, & \text{if }q^2-q-2\leq k \leq q^2-3.
\end{array}\right.
$$
On the other hand, from the description of $G_{k, 0}^3$ given in Lemma \ref{lemma 1}, we can deduce that, for each $0\leq k \leq q^2-3$,
$$
|G_{k, 0}^3|=\sum_{i_2=\max\{0, k-q^2+q+2\}}^{\min\{q, k+2\}}\sum_{k_1=k+1}^{q^2-3}(\min\{q, k_1+2\}-\max\{i_2, k_1-q^2+q+2\}+1).
$$
So, to determine $|G_{k, 0}^3|$ we will separate the calculus in three cases.

$\diamond$ For $0\leq k \leq q-2$,
\begin{align*}
|G_{k, 0}^3|&=\sum_{i_2=0}^{k+2}\sum_{k_1=k+1}^{q^2-3}(\min\{q, k_1+2\}-\max\{i_2, k_1-q^2+q+2\}+1)\\
&=\sum_{i_2=0}^{k+2}\left(\sum_{k_1=k+1}^{q-2}(k_1+3-i_2)+\sum_{k_1=q-1}^{q^2-q-2+i_2}(q+1-i_2)+\sum_{k_1=q^2-q-1+i_2}^{q^2-3}(q^2-1-k_1)\right)\\
&=(k+3)\left(q^3-5-\frac{2k^2+39k+10}{12}-\frac{(2q^2-2q-5)(k+2)}{4}\right).
\end{align*}

$\diamond$ For $q-1\leq k \leq q^2-q-3$,
\begin{align*}
|G_{k, 0}^3|&=\sum_{i_2=0}^{q}\sum_{k_1=k+1}^{q^2-3}(\min\{q, k_1+2\}-\max\{i_2, k_1-q^2+q+2\}+1)\\
&=\sum_{i_2=0}^{q}\left(\sum_{k_1=k+1}^{q^2-q-2+i_2}(q+1-i_2)+\sum_{k_1=q^2-q-1+i_2}^{q^2-3}(q^2-1-k_1)\right)\\
&=(q+1)\left(\frac{3q^3+5q^2-8q-12}{6}-\frac{k(q+2)}{2}\right).
\end{align*}

$\diamond$ For $q^2-q-2\leq k \leq q^2-3$,
\begin{align*}
|G_{k, 0}^3|&=\sum_{i_2=k-q^2+q+2}^{q}\sum_{k_1=k+1}^{q^2-3}(\min\{q, k_1+2\}-\max\{i_2, k_1-q^2+q+2\}+1)\\
&=\sum_{i_2=k-q^2+q+2}^{q}\left(\sum_{k_1=k+1}^{q^2-q-2+i_2}(q+1-i_2)+\sum_{k_1=q^2-q-1+i_2}^{q^2-3}(q^2-1-k_1)\right)\\
&=\frac{q^2(q^4-3q^2+2)}{3}-\frac{k^2(k-3q^2+3)}{3}-\frac{k(3q^4-6q^2+2)}{3}.
\end{align*}

Now, by Lemma \ref{lemma 2}, $|G_{k, 0}^4| = |G_{k, 0}^3|$. Since $|G_{k, 0}|=|G_{k, 0}^1|+2|G_{k, 0}^3|$ for each $0\leq k\leq q^2-3$, then from (\ref{CardinalityPuregaps}) and after some calculations using SageMath program we get
$$
|G_0(P_0, P_\infty)|=\frac{q(q-1)(10q^8+10q^7-25q^6-9q^5+71q^4-111q^3-86q^2+128q-12)}{120}.
$$
\end{proof}

\begin{example}
For $q=2$, we have that  the pure gap set $G_0(P_0,P_\infty)$ of the $GK$ curve is given by 
$$
\begin{array}{llll}
G_0(P_0,P_\infty) & = & & \left( G_{0,0}^{1} \cup G_{0,0}^{3} \cup G_{0,0}^{4} \right) \\
                          &     & \bigcup & \left( G_{1,0}^{1} \cup G_{1,0}^{3} \cup G_{1,0}^{4} \right) \\
                          &  & \bigcup & \left( G_{1,0}^{1} \cup G_{1,0}^{3} \cup G_{1,0}^{4} \right) + (-9,9),
\end{array}
$$
where
$$
\begin{array}{l}
G_{0,0}^{1} =  \{ (1,1), (1,2), (1,4), (2,1), (2,2), (2,4), (4,1), (4,2), (4,4) \} \\
G_{0,0}^{3} =  \{ (3,1), (3,2), (5,1), (5,2), (5,4), (7,1), (7,2), (7,4) \} \\
G_{0,0}^{4} =  \{ (1,3), (1,5), (1,7), (2,3), (2,5), (2,7), (4,5), (4,7) \} \\
G_{1,0}^{1} =  \{ (10,1) \} \\
G_{1,0}^{3} =  \{ (11,1), (13,1)\}\\
G_{1,0}^{4} =  \{ (10,2), (10,4) \}.
\end{array}
$$

So, we get
$$G_0(P_0,P_\infty)=\left\{
\begin{array}{l} 
 (1, 1),
 (1, 2),
 (1, 3),
 (1, 4),
 (1, 5),
 (1, 7),
 (1, 10),
 (1, 11),
 (1, 13),
 (2, 1),\\
 (2, 2),
 (2, 3),
 (2, 4),
 (2, 5),
 (2, 7),
 (2, 10),
 (3, 1),
 (3, 2),
 (4, 1),
 (4, 2),\\
 (4, 4),
 (4, 5),
 (4, 7),
 (4, 10),
 (5, 1),
 (5, 2),
 (5, 4),
 (7, 1),
 (7, 2),
 (7, 4),\\
 (10, 1),
 (10, 2),
 (10, 4),
 (11, 1),
 (13, 1)
 \end{array}\right\},$$ 
and its cardinality is $|G_0(P_0,P_\infty)|=35$.
\end{example}

\subsection{Kummer extensions}
Let $K$ be the algebraic closure of $\mathbb{F}_q$ and consider the Kummer extension defined by the equation
\begin{equation*}\label{curveX}
\cX:\quad Y^m=f(X)^\lambda,
\end{equation*}
where $\lambda \in \mathbb{N}$, $m\geq 2$ is an integer such that $\char(K)=p\nmid m$, and $f(X)\in K[X]$ is a separable polynomial of degree $r\geq 2$ with $\gcd(m, \lambda r)=1$. Let $K(\mathcal{X})$ be its function field which has genus $g=\frac{1}{2} (m-1)(r-1)$. Denote by $P_\infty$ the single place at infinity and let $P_1, P_2$ be totally ramified places of the extension $K(\cX)/K(x)$ distinct to $P_\infty$. The pure gap set $G_0(P_1, P_2)$ was explicitly determined in \cite[Corollary 3]{HY2018}. In this subsection, we provide another explicit description of $G_0(P_1, P_2)$ and calculate its cardinality.

Let $\al_1$ and $\al_2$ be the roots of $f(X)$ corresponding to the places $P_1$ and $P_2$ respectively, then we have the principal divisor $(x-\al_i)=m(P_i-P_{\infty})$ for $i=1, 2$. So,

\begin{equation} \label{div x Kummer}
\left( \frac{x-\al_1}{x-\al_2} \right)=m(P_1-P_2).
\end{equation}

\begin{proposition}\label{periodKummer}
The period of the semigroup $H(P_1, P_2)$ is $\pi=m$.
\end{proposition}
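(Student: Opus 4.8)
The goal is to show that the period $\pi$ of $H(P_1,P_2)$ equals $m$. Recall that $\pi=\min\{k\in\N : k(P_1-P_2)\text{ is principal}\}$. The divisor relation (\ref{div x Kummer}) immediately gives a principal divisor equal to $m(P_1-P_2)$, namely $\left(\frac{x-\al_1}{x-\al_2}\right)$. This shows $\pi\mid m$, and in particular $\pi\leq m$. The entire content of the proposition is therefore the reverse inequality $\pi\geq m$, i.e.\ ruling out any proper divisor of $m$ (or more directly, any integer $1\leq k<m$) from producing a principal divisor of the form $k(P_1-P_2)$.

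\emph{Reverse inequality via valuations.}
First I would recall the standard ramification data for the Kummer extension $K(\cX)/K(x)$: each $P_i$ ($i=1,2$) is totally ramified over the corresponding place of $K(x)$, with ramification index $m$, since $\gcd(m,\la r)=1$ forces the roots of $f$ to ramify completely in $Y^m=f(X)^\la$. Concretely, this means $v_{P_1}(x-\al_1)=m$ and $v_{P_2}(x-\al_2)=m$, while $v_{P_1}(x-\al_2)=v_{P_2}(x-\al_1)=0$ because $\al_1\neq\al_2$. The plan is to suppose, for contradiction, that $1\leq k<m$ and $k(P_1-P_2)=(w)$ for some $w\in K(\cX)^\times$. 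Evaluating the valuation of $w$ at $P_1$ gives $v_{P_1}(w)=k$ with $0<k<m$. The key step is to derive a contradiction from the fact that $v_{P_1}$ takes all its values on functions in a way controlled by $m$: because $P_1$ lies over a rational place of $K(x)$ with ramification index $m$ and the extension is totally ramified there, the only functions whose divisor is supported entirely on $\{P_1,P_2\}$ with opposite signs arise (up to the constant field) as powers of $\frac{x-\al_1}{x-\al_2}$, whose $P_1$-valuations are multiples of $m$.

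\emph{Making the obstruction precise.}
The cleanest way to execute this is to compute $v_{P_1}$ on the normalizing parameter. Since $P_1$ is totally ramified of index $m$ over the place $x=\al_1$, a uniformizer at $P_1$ can be taken to be a function $t$ with $v_{P_1}(t)=1$ and $t^m$ differing from $(x-\al_1)$ by a unit at $P_1$; equivalently, $y$ itself (suitably normalized) has $v_{P_1}(y)=\la r$, coprime to $m$. The functional relation $Y^m=f(X)^\la$ then pins down all $P_1$-valuations of monomials $x^a y^b$ modulo $m$, and one argues that the value group of $v_{P_1}$ on the subgroup of functions supported only on $P_1,P_2$ is exactly $m\Z$. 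Hence $v_{P_1}(w)=k$ with $0<k<m$ is impossible, giving the contradiction and forcing $\pi\geq m$. Combined with $\pi\leq m$ from (\ref{div x Kummer}), we conclude $\pi=m$.

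\emph{Main obstacle.}
The routine parts are the divisor relation and the valuation bookkeeping; the genuine difficulty is justifying rigorously that \emph{no} intermediate $k$ works, i.e.\ that the value group of $v_{P_1}$ restricted to functions with divisor supported on $\{P_1,P_2\}$ is precisely $m\Z$ rather than a coarser lattice allowing some $0<k<m$. I expect the paper to handle this either by the total-ramification argument sketched above (tracking valuations through $Y^m=f(X)^\la$ and using $\gcd(m,\la r)=1$) or, more slickly, by the observation that $\pi\in H(P_\infty)$ together with an explicit description of the Weierstrass semigroup $H(P_\infty)$ of the Kummer extension, mirroring the $GK$ argument in Proposition~\ref{periodGKcurve} where the smallest nonzero nongap ruled out all candidate periods below $m$. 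The $GK$-style approach is likely the intended one: show $\pi$ must be a nongap at a totally ramified place, then verify that the semigroup structure leaves $m$ as the only possibility.
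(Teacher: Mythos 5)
Your first inequality $\pi\leq m$ matches the paper, but your main line of attack for $\pi\geq m$ has a genuine gap: the assertion that ``the only functions whose divisor is supported entirely on $\{P_1,P_2\}$ with opposite signs arise as powers of $\frac{x-\al_1}{x-\al_2}$,'' equivalently that the value group of $v_{P_1}$ on such functions is $m\Z$, is not a consequence of the ramification bookkeeping you describe --- it \emph{is} the statement $\pi=m$ in disguise. Total ramification tells you the valuations of $x-\al_1$ and of monomials $x^ay^b$, but says nothing a priori about an arbitrary $w$ with $(w)=k(P_1-P_2)$; so as written the argument is circular at exactly the step you flag as the main obstacle.

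Your fallback guess is close to what the paper does, but two corrections are needed. First, a function $z$ with $(z)=\pi(P_1-P_2)$ has its pole at $P_2$, so the relevant semigroup is $H(P_2)$ (or $H(P_1)$ via $z^{-1}$), not $H(P_\infty)$; nothing forces $\pi\in H(P_\infty)$ here. Second, the smallest nonzero nongap alone does not ``rule out all candidate periods below $m$'': it only gives $\pi\geq m-\floor*{m/r}$, the multiplicity of $H(P_2)$ (quoted from \cite[Proposition 3.3]{CMQ2023}). The decisive extra step in the paper is to observe that $\bigl(\tfrac{x-\al_1}{z(x-\al_2)}\bigr)=(m-\pi)(P_1-P_2)$, so that $m-\pi$ is \emph{also} a nonzero element of $H(P_2)$, while $0<m-\pi\leq\floor*{m/r}<m-\floor*{m/r}$ --- a contradiction. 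It is this two-sided squeeze (both $\pi$ and $m-\pi$ must be nongaps at $P_2$, but they cannot both clear the multiplicity) that closes the argument, exactly as in the $GK$ case you cite; your proposal names the strategy but does not execute either half of it.
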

\begin{proof}
By (\ref{div x Kummer}), we have that $\pi\leq m$. Suppose that $1\leq \pi \leq m-1$ and let $z\in K(\cX)$ be such that $(z)=\pi(P_1-P_2)$. From \cite[Proposition 3.3]{CMQ2023}, the smallest non-zero element of $H(P_2)$ is $m-\floor*{m/r}$, therefore $m-\floor*{m/r}\leq \pi \leq m-1$. Since  
$$
\left(\frac{x-\al_1}{z(x-\al_2)}\right)=(m-\pi)(P_1-P_2)\;,
$$ 
we obtain that $m-\pi\in H(P_2)$ and $0<m-\pi\leq m-(m-\floor*{m/r})=\floor*{m/r}<m-\floor*{m/r}$, a contradiction. This implies that $\pi=m$.
\end{proof}

By Remark \ref{empty}, we have that $\Gamma_{k, 0} = \emptyset$ for $k \geq r-1-\floor*{r/m}$. In order to determine $\Gamma_{k, 0}$ for $k \leq r-2-\floor*{r/m}$, we will use the following description of the minimal generating set $\Gamma(P_1, P_2)$ given in \cite[Theorem 8]{YH2017}:
\begin{align}\label{GammaKummer}
\Gamma(P_1, P_2)=\Bigg\{(mk_1+j, mk_2+j)\in \N^2 &: 1\leq j \leq m-1-\floor*{\frac{m}{r}},\, k_1\geq 0,\, k_2\geq 0, \\
& \quad \text{and } k_1+k_2=r-2-\floor*{\frac{rj}{m}}\Bigg\} \nonumber.
\end{align}
So, we get the following result.

\begin{proposition}\label{Gammak0-Kummer}
For $0\leq k \leq r-2-\floor*{r/m}$,
\begin{equation*}
\Gamma_{k, 0}=\left\{(mk+j, j)\in \N^2: \max\left\{1, m-\floor*{\frac{m(k+2)}{r}}\right\}\leq j \leq m-1-\floor*{\frac{m(k+1)}{r}}\right\}
\end{equation*}
and
\begin{equation*}
|\Gamma_{k, 0}|=\ceil*{\frac{m(k+2)}{r}}-\ceil*{\frac{m(k+1)}{r}}.
\end{equation*}
\end{proposition}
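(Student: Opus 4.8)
The plan is to start from the explicit description of the minimal generating set in (\ref{GammaKummer}) together with $\pi=m$ from Proposition \ref{periodKummer}, and simply intersect $\Gamma(P_1,P_2)$ with the box $(km,(k+1)m]\times(0,m]$ that defines $\Gamma_{k,0}$. I would write a general element of $\Gamma(P_1,P_2)$ as $(mk_1+j,\,mk_2+j)$ with $1\le j\le m-1-\floor*{m/r}$, $k_1,k_2\ge 0$ and $k_1+k_2=r-2-\floor*{rj/m}$. Since $0<j<m$, membership in the box is easy to pin down: the second coordinate $mk_2+j$ is automatically positive, and $mk_2+j\le m$ together with $j\ge 1$ forces $k_2=0$; likewise $0<j<m$ places $mk_1+j$ strictly between the consecutive multiples $mk_1$ and $m(k_1+1)$, so $mk_1+j\in(km,(k+1)m]$ forces $k_1=k$. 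Substituting $k_1=k$, $k_2=0$ into $k_1+k_2=r-2-\floor*{rj/m}$ turns the defining relation into the single condition $\floor*{rj/m}=r-2-k$, so that $\Gamma_{k,0}=\{(mk+j,j): 1\le j\le m-1-\floor*{m/r},\ \floor*{rj/m}=r-2-k\}$.

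Next I would convert $\floor*{rj/m}=r-2-k$ into explicit bounds on $j$. This condition is equivalent to $\frac{m(r-2-k)}{r}\le j<\frac{m(r-1-k)}{r}$, i.e. to $m-\frac{m(k+2)}{r}\le j<m-\frac{m(k+1)}{r}$; taking integer parts (and using $\lceil m-x\rceil=m-\floor*{x}$ for integer $m$) yields $m-\floor*{m(k+2)/r}\le j\le m-1-\floor*{m(k+1)/r}$. Because $k\ge 0$ gives $\floor*{m(k+1)/r}\ge\floor*{m/r}$, the derived upper bound is already $\le m-1-\floor*{m/r}$, so the constraint $j\le m-1-\floor*{m/r}$ coming from (\ref{GammaKummer}) is redundant; intersecting the remaining lower bound with $j\ge 1$ produces exactly the stated range with $\max\{1,\,m-\floor*{m(k+2)/r}\}$, giving the claimed description of $\Gamma_{k,0}$.

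For the cardinality I would count the positive integers $j$ with $\floor*{rj/m}=r-2-k$. The solutions of $\floor*{rj/m}=r-2-k$ in $\Z$ form a block of consecutive integers lying in $[\,m(r-2-k)/r,\ m(r-1-k)/r\,)$, of size $\lceil m(r-1-k)/r\rceil-\lceil m(r-2-k)/r\rceil=\floor*{m(k+2)/r}-\floor*{m(k+1)/r}$. Since the left endpoint $m(r-2-k)/r\ge 0$ (as $k\le r-2$), every integer in this block is nonnegative, and the only one excluded by the requirement $j\ge 1$ is $j=0$; moreover $j=0$ belongs to the block precisely when $r-2-k=0$, that is, when $k=r-2$. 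Hence $|\Gamma_{k,0}|=\floor*{m(k+2)/r}-\floor*{m(k+1)/r}$ for $k<r-2$, and one less when $k=r-2$. To recast this as the ceiling expression I would invoke $\gcd(m,r)=1$ (which follows from $\gcd(m,\la r)=1$): then $r\mid m(k+i)\iff r\mid(k+i)$, so $\lceil m(k+i)/r\rceil-\floor*{m(k+i)/r}$ equals $1$ unless $r\mid(k+i)$. For $0\le k\le r-2$ one has $1\le k+1\le r-1$, whence $r\nmid(k+1)$ always, while $r\mid(k+2)$ holds iff $k=r-2$; thus $\ceil*{m(k+2)/r}-\ceil*{m(k+1)/r}$ differs from $\floor*{m(k+2)/r}-\floor*{m(k+1)/r}$ by exactly $-1$ when $k=r-2$ and by $0$ otherwise. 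This correction is precisely the $j=0$ adjustment above, so in all cases $|\Gamma_{k,0}|=\ceil*{m(k+2)/r}-\ceil*{m(k+1)/r}$.

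The main obstacle is this last bookkeeping step. A naive count at the floor level over-counts by one exactly in the boundary case $k=r-2$ (which can occur only when $\floor*{r/m}=0$, i.e. $r<m$), because the candidate $j=0$ must be discarded; the clean ceiling formula works only because the coprimality $\gcd(m,r)=1$ guarantees that $m(k+1)/r$ and $m(k+2)/r$ are non-integers throughout the relevant range except for the single coincidence $r\mid(k+2)$ at $k=r-2$, which exactly compensates for the discarded $j=0$. I would therefore isolate this boundary carefully and invoke $\gcd(m,r)=1$ explicitly, since without it the floor- and ceiling-based counts genuinely differ.
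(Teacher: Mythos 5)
Your argument is correct and follows essentially the same route as the paper: both intersect the explicit description of $\Gamma(P_1,P_2)$ from (\ref{GammaKummer}) with the box $(k\pi,(k+1)\pi]\times(0,\pi]$, translate the resulting condition $k=r-2-\floor*{rj/m}$ into the stated range for $j$, and then count that range. The only (cosmetic) difference is the final bookkeeping: the paper case-splits on $k<r-2-\floor*{r/m}$ versus $k=r-2-\floor*{r/m}$, whereas you convert floors to ceilings uniformly via $\gcd(m,r)=1$ with the excluded $j=0$ compensating exactly -- a hypothesis the paper's computation also relies on but leaves implicit.
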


\begin{proof}
 From (\ref{GammaKummer}) and Proposition \ref{periodKummer}, we deduce that for $0\leq k\leq r-2-\floor*{r/m}$,
\begin{align*}
\Gamma_{k, 0}&=\left\{(mk+j, j): 1\leq j \leq m-1-\floor*{\frac{m}{r}}, \, k=r-2-\floor*{\frac{rj}{m}}\right\}\\
&=\left\{(mk+j, j): 1\leq j \leq m-1-\floor*{\frac{m}{r}}, \, m-\floor*{\frac{m(k+2)}{r}}\leq j\leq m-1-\floor*{\frac{m(k+1)}{r}}\right\}\\
&=\left\{(mk+j, j): \max\left\{1, m-\floor*{\frac{m(k+2)}{r}}\right\}\leq j \leq m-1-\floor*{\frac{m(k+1)}{r}}\right\}.
\end{align*}
This implies that
$$
|\Gamma_{k, 0}|=m-\floor*{\frac{m(k+1)}{r}}-\max\left\{1, m-\floor*{\frac{m(k+2)}{r}}\right\}.
$$
$\diamond$ If $k<r-2-\floor*{r/m}$ then $1< m-\floor*{m(k+2)/r}$ and  
$$
|\Gamma_{k, 0}|=\floor*{\frac{m(k+2)}{r}}-\floor*{\frac{m(k+1)}{r}}=\ceil*{\frac{m(k+2)}{r}}-\ceil*{\frac{m(k+1)}{r}}.
$$
$\diamond$ If $k=r-2-\floor*{r/m}$ then $m-\floor*{m(k+2)/r}\leq 1$ and 
$$
|\Gamma_{k, 0}|=m-\floor*{\frac{m(r-1)}{r}}-1=m-\ceil*{\frac{m(r-1)}{r}}.
$$
This concludes the proof.
\end{proof}

Now, using the before proposition, we will go to calculate the sets $G_{k,0}^i$ for $i=1,2,3,4$. We observe that, by Remark \ref{Gi vazio}, if $k \geq r-2-\floor*{r/m}$ then $G_{k, 0}^1 = G_{k, 0}^3 = G_{k, 0}^4 = \emptyset$, and if $k \geq r-1-\floor*{r/m}$ then $G_{k, 0}^2 = \emptyset$.

\begin{lemma}\label{lemma G^i_k,0 Kummer}
For $0\leq k \leq r-3-\floor*{r/m}$, 
\begin{align*}
& G_{k, 0}^1=\{(mk+j_2, j_1): 1\leq j_i\leq m-1-\floor*{m(k+2)/r}\text{ for }i=1, 2 \},\\
& G_{k, 0}^3=\left\{(mk+j, j_1):\begin{array}{l}
m-\floor*{m(k+2)/r}\leq j \leq m-1-\floor*{m(k+1)/r}\\
\text{and }1\leq j_1 \leq m-1-\floor*{m(k+2)/r} 
\end{array}\right\}, \text{ and }\\
& G_{k, 0}^4=\{(b, a)\in \N^2 : (a, b)\in G_{k, 0}^3\}-\w_k.
\end{align*}
Furthermore, $G_{k, 0}^2=\emptyset$ for $0\leq k \leq r-2-\floor*{r/m}$.
\end{lemma}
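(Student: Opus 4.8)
The plan is to compute each of the four sets $G_{k,0}^i$ by substituting the explicit description of $\Gamma_{k,0}$ from Proposition \ref{Gammak0-Kummer} into the formulas for $G_{k,0}^i$ given in (\ref{G^i_k0}), and then to identify $G_{k,0}^2=\emptyset$ and the symmetry $G_{k,0}^4$ versus $G_{k,0}^3$ via Lemma \ref{lemma_diagonal}. First I would verify the hypothesis of Lemma \ref{lemma_diagonal}: for a Kummer extension every element of $\Gamma(P_1,P_2)$ has the form $(mk_1+j, mk_2+j)$ by (\ref{GammaKummer}), so both coordinates are congruent to $j$ modulo $m=\pi$, which gives $\be\equiv\tau_{P_1,P_2}(\be)\pmod{\pi}$ for every $\be\in G(P_1)$. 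Lemma \ref{lemma_diagonal} then immediately yields $G_{k,0}^2=\emptyset$ for all admissible $k$ (in particular for $0\le k\le r-2-\floor{r/m}$) and $G_{k,0}^4=\{(b,a):(a,b)\in G_{k,0}^3\}-\w_k$.

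\medskip

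Next I would compute $G_{k,0}^1$. By (\ref{G^i_k0}), $G_{k,0}^1=\{\glb(\u+\w_{k_2-k},\v):\u\in\Gamma_{k_2,0},\,\v\in\Gamma_{k_1,0},\,k<k_1,\,k<k_2\}$. Writing $\u=(mk_2+j_2,j_2)$ and $\v=(mk_1+j_1,j_1)$ from Proposition \ref{Gammak0-Kummer}, we have $\u+\w_{k_2-k}=(mk+j_2,j_2+(k_2-k)m)$; since $k_2>k$ the second coordinate of this shifted point exceeds $j_1<m$, while the first coordinate $mk+j_2$ satisfies $mk<mk+j_2<m(k+1)<mk_1+j_1$, so the $\glb$ selects the first coordinate from the shifted $\u$ and the second from $\v$, giving $(mk+j_2,j_1)$. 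The admissible range for $j_i$ shrinks to $1\le j_i\le m-1-\floor{m(k+2)/r}$ precisely because the index $k_i$ can be any value strictly larger than $k$: the smallest bound on $j_i$ across all such $k_i$ comes from $k_i=k+1$, whose upper limit in Proposition \ref{Gammak0-Kummer} is $m-1-\floor{m(k+2)/r}$. This recovers the stated $G_{k,0}^1$.

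\medskip

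For $G_{k,0}^3=\{\glb(\u,\v):\u\in\Gamma_{k,0},\,\v\in\Gamma_{k_1,0},\,\u\not\preceq\v,\,k<k_1\}$, I would write $\u=(mk+j,j)\in\Gamma_{k,0}$ and $\v=(mk_1+j_1,j_1)\in\Gamma_{k_1,0}$ with $k_1>k$. Because $mk+j<mk_1+j_1$ always holds (the first coordinates are separated by at least one full period), the condition $\u\not\preceq\v$ reduces to $j>j_1$, i.e. the second coordinate of $\u$ exceeds that of $\v$. Then $\glb(\u,\v)=(mk+j,j_1)$, and as $k_1$ ranges over all values $>k$ the coordinate $j_1$ ranges over $1\le j_1\le m-1-\floor{m(k+2)/r}$ (again the widest such range is attained at $k_1=k+1$), while $j$ ranges over the full index set $m-\floor{m(k+2)/r}\le j\le m-1-\floor{m(k+1)/r}$ for $\Gamma_{k,0}$, with the constraint $j>j_1$ automatically satisfied since every such $j$ exceeds the upper bound for $j_1$. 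This gives the stated description, and combining with the diagonal symmetry produces $G_{k,0}^4$.

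\medskip

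The main obstacle I anticipate is the careful bookkeeping of the floor-function index ranges: one must confirm that the union over all $k_1>k$ (resp. $k_2>k$) of the coordinate ranges coming from Proposition \ref{Gammak0-Kummer} collapses cleanly to the single range governed by $k+1$, and that the separation inequalities between first coordinates (guaranteeing which term the $\glb$ selects and that $\u\not\preceq\v$ reduces to a comparison of second coordinates alone) hold uniformly. These rely on the monotonicity of $\floor{m(k+1)/r}$ in $k$ and on the period $m$ strictly exceeding every residue $j<m$; I would isolate these monotonicity and separation facts as the key lemmatic observations before assembling the four descriptions.
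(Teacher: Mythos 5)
Your proposal is correct and follows essentially the same route as the paper's proof: verify $\be\equiv\tau_{P_1,P_2}(\be)\pmod{m}$ from (\ref{GammaKummer}) and invoke Lemma \ref{lemma_diagonal} for $G_{k,0}^2$ and $G_{k,0}^4$, then compute the $\glb$'s explicitly from Proposition \ref{Gammak0-Kummer} for $G_{k,0}^1$ and $G_{k,0}^3$. The one point to state more carefully is that the range $1\leq j_i\leq m-1-\floor*{m(k+2)/r}$ is not ``the range governed by $k_i=k+1$'' but the union of the abutting intervals $[m-\floor*{m(k_i+2)/r},\,m-1-\floor*{m(k_i+1)/r}]$ over all $k_i>k$, which tile that full interval down to $1$ (the lower endpoint coming from $k_i=r-2-\floor*{r/m}$); you correctly flag this verification at the end, and it is exactly the bookkeeping the paper carries out.
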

\begin{proof}
We start by noting that, from the description of $\Gamma(P_1, P_2)$ given in (\ref{GammaKummer}), $\be \equiv \tau_{P_1, P_2}(\be) \pmod{m}$ for each $\be\in G(P_1)$. Therefore, by Lemma \ref{lemma_diagonal}, we obtain the result for the sets $G_{k, 0}^2$ and $G_{k, 0}^4$.

For each $0\leq k\leq r-3-\floor*{r/m}$, let $\u=(mk_2+j_2, j_2)\in \Gamma_{k_2, 0}$ and $\v=(mk_1+j_1, j_1)\in \Gamma_{k_1, 0}$, where $k<k_i\leq r-2-\floor*{r/m}$ for $i=1, 2$. Then $\u+\w_{k_2-k}=(mk+j_2, m(k_2-k)+j_2)$ and therefore 
$$
\glb (\u+\w_{k_2-k}, \v)=(mk+j_2, j_1).
$$
Thus, from (\ref{G^i_k0}) and Proposition \ref{Gammak0-Kummer} we obtain that
$$
G_{k, 0}^1=\left\{(mk+j_2, j_1):\begin{array}{l}
\max\{1, m-\floor*{m(k_i+2)/r}\}\leq j_i \leq m-1-\floor*{m(k_i+1)/r}\\
\text{and }k<k_i\leq r-2-\floor*{r/m}\text{ for }i=1, 2 
\end{array}\right\}.\\
$$
Note that, for $i=1,2$, $\max\left\{1, m-\floor*{m(k_i+2)/r}\right\}=1$ if and only if $k_i=r-2-\floor*{r/m}$. So, considering the intervals in the set $G_{k, 0}^1$ above, for each $i=1,2$, we have that
\begin{align*}
& \text{for } k_i=k+1: \quad m-\floor*{\frac{m(k+3)}{r}}\leq j_i\leq m-1-\floor*{\frac{m(k+2)}{r}};\\
& \text{for } k_i=k+2: \quad m-\floor*{\frac{m(k+4)}{r}}\leq j_i\leq m-1-\floor*{\frac{m(k+3)}{r}};\\
& \hspace{12mm} \vdots \\
& \text{for } k_i=r-3-\floor*{r/m}: \quad m-\floor*{\frac{m(r-1-\floor*{r/m})}{r}}\leq j_i\leq m-1-\floor*{\frac{m(r-2-\floor*{r/m})}{r}};\\
& \text{for } k_i=r-2-\floor*{r/m}: \quad 1 \leq j_i\leq m-1-\floor*{\frac{m(r-1-\floor*{r/m})}{r}}.
\end{align*}
Thus, we have that $1\leq j_i\leq  m-1-\floor*{m(k+2)/r}$, and we get
$$
G_{k, 0}^1=\Bigg\{(mk+j_2, j_1): 1\leq j_i\leq m-1-\floor*{m(k+2)/r}\text{ for }i=1, 2 \Bigg\}.
$$

Now, in order to calculate the set $G_{k, 0}^3$, let $\u=(mk+j, j)\in \Gamma_{k, 0}$ and $\v=(mk_1+j_1, j_1)\in \Gamma_{k_1, 0}$, where $k<k_1\leq r-2-\floor*{r/m}$. Then, from Proposition \ref{Gammak0-Kummer}, 
$$
m-\floor*{\frac{m(k+2)}{r}}\leq j \leq m-1-\floor*{\frac{m(k+1)}{r}}
$$
and 
$$\max\left\{1, m-\floor*{\frac{m(k_1+2)}{r}}\right\}\leq j_1 \leq m-1-\floor*{\frac{m(k_1+1)}{r}}.
$$ 
Since $k<k_1$, we obtain that $mk+j<mk_1+j_1$ and 
$$
j_1\leq m-1-\floor*{\frac{m(k_1+1)}{r}}\leq m-1-\floor*{\frac{m(k+2)}{r}}\leq j-1<j.
$$
This implies that $\u \npreceq \v$, and from (\ref{G^i_k0}) we obtain that
\begin{align*}
G_{k, 0}^3&=\{\glb (\u, \v): \u\in \Gamma_{k, 0}, \, \v\in \Gamma_{k_1, 0}, \, k<k_1\}\\
&=\left\{(mk+j, j_1):\begin{array}{l}
m-\floor*{m(k+2)/r}\leq j \leq m-1-\floor*{m(k+1)/r},\\
\max\{1, m-\floor*{m(k_1+2)/r}\}\leq j_1 \leq m-1-\floor*{m(k_1+1)/r},\\
\text{and }k<k_1\leq r-2-\floor*{r/m} 
\end{array}\right\}.
\end{align*}

By similar arguments used to determine the set $G_{k, 0}^1$ above, we get
$$
G_{k, 0}^3=\left\{(mk+j, j_1):\begin{array}{l}
m-\floor*{m(k+2)/r}\leq j \leq m-1-\floor*{m(k+1)/r}\\
\text{and }1\leq j_1 \leq m-1-\floor*{m(k+2)/r}
\end{array}\right\}.
$$
\end{proof}

\begin{theorem}\label{Card of pure gap set of Kummer} 
Let $P_1$ and $P_2$ be totally ramified places of the extension $K(\cX)/K(x)$ distinct to $P_\infty$. Let $G^1_{k, 0}$, $G^3_{k, 0}$ and $G^4_{k, 0}$ be the sets given in the above lemma. Then
$$
G_0(P_1, P_2) = \bigcup_{\substack{0 \leq j \leq k \leq r-3-\floor*{r/m}}} \left( (G_{k,0}^{1} \cup G_{k,0}^{3} \cup G_{k,0}^{4}) + \w_{j}  \right)
$$
and
$$
|G_0(P_1, P_2)|=\sum_{k=1}^{r-2-\floor*{r/m}}k\left[\left(m-\ceil*{\frac{mk}{r}}\right)^2-\left(\ceil*{\frac{m(k+1)}{r}}-\ceil*{\frac{mk}{r}}\right)^2\right].
$$
\end{theorem}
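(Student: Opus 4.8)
The plan is to establish the two assertions separately, deriving both from the structural results already proved for Kummer extensions. For the set equality I would start from the general decomposition (\ref{conjunto pure gaps}), which for $\pi=m$ (Proposition \ref{periodKummer}) reads $G_0(P_1,P_2)=\bigcup_{0\le j\le k<\ceil*{\frac{2g-1}{\pi}}}(G_{k,0}+\w_{j})$. The first step is to record the identity $\ceil*{\frac{2g-1}{\pi}}=r-1-\floor*{r/m}$, obtained by substituting $g=\frac12(m-1)(r-1)$ and $\pi=m$ into $\ceil*{\frac{2g-1}{m}}$ and simplifying $mr-m-r$ over $m$ using $\ceil*{-r/m}=-\floor*{r/m}$. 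Then, by (\ref{G_k0}), each $G_{k,0}$ splits as $G_{k,0}^1\cup G_{k,0}^2\cup G_{k,0}^3\cup G_{k,0}^4$; Lemma \ref{lemma G^i_k,0 Kummer} gives $G_{k,0}^2=\emptyset$ throughout the range, while Remark \ref{Gi vazio} shows that at the top index $k=r-2-\floor*{r/m}$ the three remaining pieces also vanish. Hence that layer contributes nothing, the union may be truncated at $k=r-3-\floor*{r/m}$, and one obtains exactly the claimed description $G_0(P_1,P_2)=\bigcup_{0\le j\le k\le r-3-\floor*{r/m}}\bigl((G_{k,0}^1\cup G_{k,0}^3\cup G_{k,0}^4)+\w_{j}\bigr)$.

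For the cardinality I would invoke (\ref{CardinalityPuregaps}), namely $|G_0(P_1,P_2)|=\sum_{k=0}^{r-2-\floor*{r/m}}(k+1)|G_{k,0}|$, together with the disjointness of the four pieces noted after (\ref{G_k0}). Since $G_{k,0}^2=\emptyset$ and $|G_{k,0}^4|=|G_{k,0}^3|$ (Lemma \ref{lemma G^i_k,0 Kummer}, via the reflection of Lemma \ref{lemma_diagonal}), we get $|G_{k,0}|=|G_{k,0}^1|+2|G_{k,0}^3|$. Writing $a_k:=m-1-\floor*{m(k+2)/r}$, the explicit descriptions in Lemma \ref{lemma G^i_k,0 Kummer} yield $|G_{k,0}^1|=a_k^2$ and $|G_{k,0}^3|=a_k\cdot|\Gamma_{k,0}|$, where $|\Gamma_{k,0}|=\floor*{m(k+2)/r}-\floor*{m(k+1)/r}$ by Proposition \ref{Gammak0-Kummer}. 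Thus $|G_{k,0}|=a_k\bigl(a_k+2|\Gamma_{k,0}|\bigr)$.

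The remaining work is to recognize $a_k(a_k+2|\Gamma_{k,0}|)$ as a difference of squares and to convert floors into ceilings. Setting $A:=m-\ceil*{m(k+1)/r}$ and $B:=\ceil*{m(k+2)/r}-\ceil*{m(k+1)/r}$, a short computation gives $A-B=m-\ceil*{m(k+2)/r}$ and $A+B=m+\ceil*{m(k+2)/r}-2\ceil*{m(k+1)/r}$. The crucial input is $\gcd(m,r)=1$, a consequence of $\gcd(m,\lambda r)=1$, which forces $\frac{m(k+1)}{r}$ and $\frac{m(k+2)}{r}$ to be non-integral for $1\le k+1,k+2\le r-1$, so that $\ceil*{\cdot}=\floor*{\cdot}+1$ in each case. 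Substituting this identifies $A-B=a_k$ and $A+B=a_k+2|\Gamma_{k,0}|$, whence $|G_{k,0}|=A^2-B^2=\bigl(m-\ceil*{m(k+1)/r}\bigr)^2-\bigl(\ceil*{m(k+2)/r}-\ceil*{m(k+1)/r}\bigr)^2$.

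Finally I would reindex. As the $k=r-2-\floor*{r/m}$ term already vanishes, the sum in (\ref{CardinalityPuregaps}) runs effectively from $0$ to $r-3-\floor*{r/m}$; the substitution $k\mapsto k-1$ turns $(k+1)|G_{k,0}|$ into $k\,|G_{k-1,0}|$ with $k$ from $1$ to $r-2-\floor*{r/m}$, and the formula for $|G_{k-1,0}|$ is precisely the bracketed expression in the statement. The only genuinely delicate point is the floor-to-ceiling conversion: one must check that the coprimality hypothesis really makes the relevant fractions non-integral across the whole summation range, since this is exactly what validates the closed form in ceilings; the rest is bookkeeping with the index shift and the disjoint decomposition.
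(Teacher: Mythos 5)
Your proposal is correct and follows essentially the same route as the paper: both arguments rest on Lemma \ref{lemma G^i_k,0 Kummer} (giving $G_{k,0}^2=\emptyset$ and $|G_{k,0}^4|=|G_{k,0}^3|$), the decomposition (\ref{G_k0}), formula (\ref{CardinalityPuregaps}), and the final index shift $k\mapsto k-1$. The only (harmless) difference is how you reach the difference-of-squares form of $|G_{k,0}|$ — you count $|G_{k,0}^1|$ and $|G_{k,0}^3|$ directly from the explicit descriptions and convert floors to ceilings via $\gcd(m,r)=1$, whereas the paper writes $|G_{k,0}|=\bigl(\sum_{k\leq k_1}|\Gamma_{k_1,0}|\bigr)^2-|\Gamma_{k,0}|^2$ and evaluates the telescoping sum with Proposition \ref{Gammak0-Kummer}; both land on the same expression.
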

\begin{proof}
From Lemma \ref{lemma G^i_k,0 Kummer}, we deduce that $|G_{k, 0}^2|=0$, $|G_{k, 0}^3|=|G_{k, 0}^4|$ and 
$$|G_{k, 0}^3|=|\{\glb (\u, \v): \u\in \Gamma_{k, 0}, \, \v\in \Gamma_{k_1, 0}, \, k<k_1\}|=|\Gamma_{k, 0}|\left(\sum_{k<k_1}|\Gamma_{k_1, 0}|\right)
$$ 
for each $0\leq k\leq r-3-\floor*{r/m}$. Note that in this case $|G_{k, 0}^3|$ and $|G_{k, 0}^4|$ reach the upper bound given in (\ref{boundsG^ik0}). Furthermore, from (\ref{G_k0}) and Lemma \ref{card G10}, we obtain that
$$
|G_{k, 0}|=\left(\sum_{k<k_1}|\Gamma_{k_1, 0}|\right)^2+2|\Gamma_{k, 0}|\left(\sum_{k<k_1}|\Gamma_{k_1, 0}|\right)=\left(\sum_{k\leq k_1}|\Gamma_{k_1, 0}|\right)^2-|\Gamma_{k, 0}|^2.
$$
Finally, from (\ref{CardinalityPuregaps}) and Proposition \ref{Gammak0-Kummer},
\begin{align*}
|G_0(P_1, P_2)|&=\sum_{0\leq k}(k+1)|G_{k, 0}|\\
&=\sum_{k=0}^{r-3-\floor*{r/m}}(k+1)\left[\left(\sum_{k_1=k}^{r-2-\floor*{r/m}}|\Gamma_{k_1, 0}|\right)^2-|\Gamma_{k, 0}|^2\right]\\
&=\sum_{k=0}^{r-3-\floor*{r/m}}(k+1)\left[\left(m-\ceil*{\frac{m(k+1)}{r}}\right)^2-\left(\ceil*{\frac{m(k+2)}{r}}-\ceil*{\frac{m(k+1)}{r}}\right)^2\right]\\
&=\sum_{k=1}^{r-2-\floor*{r/m}}k\left[\left(m-\ceil*{\frac{mk}{r}}\right)^2-\left(\ceil*{\frac{m(k+1)}{r}}-\ceil*{\frac{mk}{r}}\right)^2\right].
\end{align*}
\end{proof}

The cardinality of the pure gap set $G_0(P_1, P_2)$ is determined in \cite[Proof of Theorem 3.2]{BQZ2018} for the case $m=ur+1$, where $u$ is a positive integer, and in \cite[Corollary 12]{YH2018} for the case $m=(q+1)/N$, where $N$ is a divisor of $q+1$ such that $q-2-N\geq 0$ and $r=q$. Below, we verify that Theorem \ref{Card of pure gap set of Kummer} allows us to obtain the same cardinalities for such cases but in a simpler way when compared to the techniques used by the authors in the references cited above.\\

$\diamond$ Case $m=ur+1$ for some positive integer $u\in \N$. By Theorem \ref{Card of pure gap set of Kummer} we have that 
\begin{align*}
|G_0(P_1, P_2)|&=\sum_{k=1}^{r-2-\floor*{r/m}}k\left[\left(m-\ceil*{\frac{mk}{r}}\right)^2-\left(\ceil*{\frac{m(k+1)}{r}}-\ceil*{\frac{mk}{r}}\right)^2\right]\\
&=\sum_{k=1}^{r-2}k\left[\left(ur+1-\ceil*{\frac{(ur+1)k}{r}}\right)^2-\left(\ceil*{\frac{(ur+1)(k+1)}{r}}-\ceil*{\frac{(ur+1)k}{r}}\right)^2\right]\\
&=\sum_{k=1}^{r-2}k\left[\left(ur+1-(uk+1)\right)^2-\left((u(k+1)+1)-(uk+1)\right)^2\right]\\
&=u^2\left((r^2-1)\sum_{k=1}^{r-2}k-2r\sum_{k=1}^{r-2}k^2+\sum_{k=1}^{r-2}k^3\right)\\
&=u^2(r-1)(r-2)\left(\frac{r^2-1}{2}-\frac{r(2r-3)}{3}+\frac{(r-2)(r-1)}{4}\right)\\
&=\frac{u^2(r-1)(r-2)r(r+3)}{12}.
\end{align*}
Note that for $u=1$ we have 
$$
|G_0(P_1, P_2)|=\frac{(r-1)(r-2)r(r+3)}{12}.
$$
Now, from Proposition \ref{Gammak0-Kummer}, 
$$
|\Gamma_{k, 0}|=\ceil*{\frac{(r+1)(k+2)}{r}}-\ceil*{\frac{(r+1)(k+1)}{r}}=k+2-(k+1)=1,
$$
and thus
\begin{align*}
\sum_{k=0}^{r-2}(k+1)\left(\sum_{k_1=k}^{r-2}|\Gamma_{k_1, 0}|\right)^2-g &= \sum_{k=0}^{r-2}(k+1)\left(r-1-k\right)^2-\frac{r(r-1)}{2}\\
&= \sum_{k=1}^{r-1}k\left(r-k\right)^2-\frac{r(r-1)}{2}\\
&=r^2\sum_{k=1}^{r-1}k-2r\sum_{k=1}^{r-1}k^2+\sum_{k=1}^{r-1}k^3-\frac{r(r-1)}{2}\\
&=\frac{(r-1)r^3}{2}-\frac{(r-1)r^2(2r-1)}{3}+\frac{(r-1)^2r^2}{4}-\frac{r(r-1)}{2}\\
&=\frac{(r-1)(r-2)r(r+3)}{12}.
\end{align*}

Therefore, for $m=r+1$, the cardinality of $G_0(P_1, P_2)$ reaches the upper bound given in Proposition \ref{cota pure gaps}, and so we conclude that this upper bound is sharp.\\

$\diamond$ Case $m=(q+1)/N$, where $N$ is a divisor of $q+1$ such that $q-2-N\geq 0$ and $r=q$. So, $r-2-\lfloor r/m \rfloor=q-1-N=(m-2)N+N-2$. Writing $k=N\ell_k+j$, where $0\leq \ell_k \leq m-2$ and $0\leq j\leq N-1$, we have that 
$$\ceil*{ \frac{mk}{r}}=\ell_k+1,\mbox{ if $0\leq \ell_k \leq m-2$ and $0\leq j\leq N-1$}\,,$$ and $$\ceil*{\frac{m(k+1)}{r}}=\left\{ \begin{array}{ll}
\ell_k+1, & \mbox{if $0\leq \ell_k\leq m-2$ and $0\leq j\leq N-2,$}\\
\ell_k+2, & \mbox{if $0\leq \ell_k\leq m-3$ and $j=N-1$.}
\end{array} \right.$$

Now, for each $1\leq k\leq q-1-N$ define 
$$
h(k)=k\left[\left(m-\ceil*{\frac{mk}{r}}\right)^2-\left(\ceil*{\frac{m(k+1)}{r}}-\ceil*{\frac{mk}{r}}\right)^2\right].
$$
So, by Theorem \ref{Card of pure gap set of Kummer} we have that 
\begin{align*}
|G_0(P_1, P_2)|&=\sum_{k=1}^{r-2-\floor*{r/m}}h(k)\\
&=\sum_{j=1}^{N-1}h(j)+\sum_{\ell_k=1}^{m-3}\sum_{j=0}^{N-1}h(N\ell_k+j)+\sum_{j=0}^{N-2}h(N(m-2)+j)\\
&=\sum_{j=1}^{N-2}h(j)+h(N-1)+\sum_{\ell_k=1}^{m-3}\sum_{j=0}^{N-2}h(N\ell_k+j)+\sum_{\ell_k=1}^{m-3}h(N\ell_k+N-1)\\
&\quad +\sum_{j=0}^{N-2}h(N(m-2)+j)\\
&=\sum_{j=1}^{N-2}j(m-1)^2+(N-1)((m-1)^2-1)+\sum_{\ell_k=1}^{m-3}\sum_{j=0}^{N-2}(N\ell_k+j)(m-\ell_k-1)^2\\
&\quad +\sum_{\ell_k=1}^{m-3}(N(\ell_k+1)-1)[(m-\ell_k-1)^2-1]+\sum_{j=0}^{N-2}(N(m-2)+j)\\
&=\frac{(q+1)(m-1)}{12}((q+1)(m-1)-2m+N+7)-q(m-1).
\end{align*}

This formula to $|G_0(P_1, P_2)|$ is the same as the one given in \cite[Corollary 12]{YH2018}.
\bibliographystyle{abbrv}

\bibliography{puregaps} 

\end{document}